\newcommandx{\inbar}[2][1=]{\todo[linecolor=ForestGreen,backgroundcolor=ForestGreen!25,bordercolor=ForestGreen,#1]{#2}}
\newcommandx{\sarah}[2][1=]{\todo[linecolor=Violet,backgroundcolor=Violet!25,bordercolor=Violet,#1]{#2}}
\DeclareMathOperator{\col}{colim}
\DeclareMathOperator{\Hom}{Hom}
\DeclareMathOperator{\tub}{tub}
\newcommand{\cC}{\mathcal{C}}
\newcommand{\sD}{\mathcal{D}}
\newcommand{\cE}{\mathcal{E}}
\newcommand{\cF}{\mathcal{F}}
\newcommand{\cI}{\mathcal{I}}
\newcommand{\cJ}{\mathcal{J}}
\newcommand{\cP}{\mathcal{P}}
\newcommand{\cX}{\mathcal{X}}
\newcommand{\cY}{\mathcal{Y}}
\newcommand{\cW}{\mathcal{W}}
\newcommand{\cZ}{\mathcal{Z}}
\newtheorem{thm}{Theorem}[section]
\newtheorem*{thm*}{Theorem}
\newtheorem{prop}[thm]{Proposition}
\newtheorem{cor}[thm]{Corollary}
\newtheorem{lemma}[thm]{Lemma}
\theoremstyle{definition}
\newtheorem{defn}[thm]{Definition}
\theoremstyle{definition}
\newtheorem{ex}[thm]{Example}
\theoremstyle{definition}
\newtheorem{rem}[thm]{Remark}
\theoremstyle{definition}
\theoremstyle{definition}
\newtheorem*{rem*}{Remark}
\newtheorem*{exA*}{Example}
\newtheoremstyle{TheoremForIntro} 
        {.6em}{.6em}              
        {\itshape}                      
        {}                              
        {\bfseries}                     
        {. }                             
        { }                             
        {\thmname{#1}\thmnote{ \bfseries #3}}
    \theoremstyle{TheoremForIntro}
    \newtheorem{TheoremIntro}[thm]{Theorem}
    \newtheorem{PropositionIntro}[thm]{Proposition}
\newcommand{\itop}{\mathsf{isvt\text{-}Top}}
\newcommand{\etop}{\mathsf{eqvt\text{-}Top}}
\newcommand{\topp}{\mathsf{Top}}
\newcommand{\imap}{\mathsf{Map_{isvt}}}
\newcommand{\emap}{\mathsf{Map_{eqvt}}}
\newcommand{\map}{\mathsf{Map}}
\newcommand{\icell}{\mathcal{I}\text{-}\mathsf{cell}}
\newcommand{\jcell}{\mathcal{J}\text{-}\mathsf{cell}}
\newcommand{\icof}{\mathcal{I}\text{-}\mathsf{cof}}
\newcommand{\jcof}{\mathcal{J}\text{-}\mathsf{cof}}
\newcommand{\iinj}{\mathcal{I}\text{-}\mathsf{inj}}
\newcommand{\jinj}{\mathcal{J}\text{-}\mathsf{inj}}
\newcommand{\horseshoe}{\mathfrak{C}}
\newcommand{\ep}{\epsilon}
\newcommand{\corner}[2]{\mathfrak{C}_{#1,#2}}
\newcommand{\iclass}[2]{[#1,#2]_{\mathsf{isvt}}}
\newcommand{\ipath}{P_{\mathsf{isvt}}}
\title[Isovariant homotopy theory and fixed point invariants]{Isovariant homotopy theory and fixed point invariants}
\author{Inbar Klang and Sarah Yeakel}
\begin{document}

\begin{abstract}
     An isovariant map is an equivariant map between $G$-spaces which strictly preserves isotropy groups. We consider an isovariant analogue of Klein--Williams equivariant intersection theory for a finite group $G$. We prove that under certain reasonable dimension and codimension conditions on $H$-fixed subspaces (for $H\leq G$), the fixed points of a self-map of a compact smooth $G$-manifold can be removed isovariantly if and only if the equivariant Reidemeister trace of the map vanishes. In doing so, we study isovariant maps between manifolds up to isovariant homotopy, yielding an isovariant Whitehead's theorem. In addition, we speculate on the role of isovariant homotopy theory in distinguishing manifolds up to homeomorphism.
\end{abstract}

\maketitle
\thispagestyle{empty}
\section{Introduction}

Advances in equivariant homotopy theory have provided important computational tools for solving problems in the nonequivariant setting. Isovariant maps, which satisfy a stronger condition than equivariance, naturally occur in surgery theory and classification questions for manifolds \cite{browderquinn}. If $X$ and $Y$ are compactly generated spaces with continuous left $G$-actions, an \emph{isovariant} map is an equivariant continuous function $f:X \to Y$ such that $G_x = G_{f(x)}$ for all $x \in X$. That is, an isovariant map preserves the $G$-action and strictly preserves isotropy subgroups.

  \medskip

  For example, consider the cyclic group with two elements, $C_2$. The one-point space $\ast$ has a trivial $C_2$-action and the unit disk $D^2$ can be given the $C_2$-action which reflects across the vertical axis. Any map from $\ast$ to $D^2$ whose image lies in the $C_2$-fixed points is both equivariant and isovariant. Indeed, any equivariant map which is injective is also isovariant. The map $D^2 \to \ast$ is equivariant, but not isovariant. While the disk with reflection action is equivariantly homotopy equivalent to the trivial one-point space, it is not isovariantly homotopy equivalent to it.


\medskip

A compelling reason to study the category of $G$-spaces with isovariant maps is the expectation that having isovariant analogues of homotopical equivariant results will provide stronger techniques for answering questions requiring more rigidity. One such question involves distinguishing manifolds up to homeomorphism. Let $M$ and $N$ be smooth, compact manifolds. It has been conjectured (see, e.g., \cite{LS}) that if the ordered configuration spaces $\mathsf{Conf}_n(M)$ and $\mathsf{Conf}_n(N)$ are homotopy equivalent for every $n$, then $M$ and $N$ are homeomorphic. For example, the lens spaces $L(7,1)$ and $L(7,2)$ are distinguished by the homotopy types of their configuration spaces \cite{LS}. Recent work on rational models of configuration spaces in \cite{CIW} suggests that this may not be the case for compact, smooth manifolds in general; the rational homotopy type of configuration spaces depends only on the rational homotopy type of the manifold in the simply-connected case, and on additional data of a partition function otherwise. 

\medskip

Instead, we propose studying the isovariant homotopy types of $M^n$ and $N^n$. The cartesian product $M^n$ has an action of the permutation group $\Sigma_n$ given by permuting the factors, and the subspace on which $\Sigma_n$ acts freely is $\mathsf{Conf}_n(M)$. Thus, if $M^n$ and $N^n$ are isovariantly homotopy equivalent for every $n$, then $\mathsf{Conf}_n(M)$ and $\mathsf{Conf}_n(N)$ are homotopy equivalent for every $n$. The isovariant homotopy type of $M^n$ contains information about the configuration spaces $\mathsf{Conf}_i(M)$, but also includes some coherence information about how they are glued to form $M^n$. Thus, a more plausible form (suggested by Cary Malkiewich) of the conjecture above might be

\medskip

\noindent \textbf{Conjecture.} If $M^n$ and $N^n$ are $\Sigma_n$-isovariantly homotopy equivalent for every $n$, then $M$ and $N$ are homeomorphic.

\medskip

This might also have interesting connections with embedding calculus, as in \cite{GKW}.

\medskip

To investigate in this direction, we study homotopy theory in the category of $G$-spaces with isovariant maps, providing new cell structures with respect to which smooth manifolds are homotopically well-behaved. As applications, we prove an isovariant Whitehead's theorem and study isovariant fixed point theory.

\medskip



A common approach in the isovariant setting is to require extra assumptions on the dimensions of gaps between isotropy subspaces so that isovariant and equivariant homotopy equivalences coincide. The gap hypotheses in this paper are significantly weaker than those in \cite{schultzgap}, and additionally weaker than those in \cite{Luc} (e.g., Definition 4.49 of that book.) We show that such strong gap hypotheses are not necessary to establish results in isovariant fixed point theory.
  
\medskip
  
Fixed point theory begins with a very classical question: given a continuous self-map of a compact space $f: X \to X$, is $f$ homotopic to a map without fixed points (elements $x \in X$ such that $f(x)=x$)? For example, the identity function of $S^1$ is homotopic to the rotation by $\pi$ map, which has no fixed points. Lefschetz defined the Lefschetz number of $f$, 
$$L(f) = \Sigma_{i=0} ^\infty (-1)^i \mathsf{rk}H_i(f)$$
and showed that for simplicial complexes $X$, if $f$ is homotopic to a map without fixed points, then $L(f) = 0$. If $X$ is a simply connected, smooth manifold of dimension at least 3, then the converse also holds: if $L(f) = 0$, then $f$ is homotopic to a map without fixed points.

\medskip

If $f: M \to M$ is a self-map of a compact, smooth manifold which is not simply connected, then $L(f)$ is not a complete invariant for the fixed point problem. In \cite{Reidemeister}, Reidemeister defined the Reidemeister trace, $R(f)$, which Wecken later showed is a complete invariant if the dimension of $M$ is at least 3 (\cite{Wecken}). That is, $R(f) = 0$ if and only if $f$ is homotopic to a map without fixed points (see also \cite{Hus}.) The Reidemeister trace $R(f)$ can be considered as an element of $H_0(L_f M)$, where $L_f M = \{ \gamma: I \to M : \gamma(1) = f(\gamma(0)) \}$ is the twisted free loop space; see, e.g., section 10 of \cite{KW1} or chapter 6 of \cite{PonThesis}.

\medskip

The fixed point problem for equivariant maps has proven to be more complicated. The equivariant Lefschetz trace, $L_G(f)$, of a $G$-equivariant map $f: M \to M$ can be considered as an element of the Burnside ring of $G$ (see, e.g., Remark 8.5.2 of \cite{tD}.) In the case of the identity map, $L_G(id)$ recovers the equivariant Euler characteristic of $M$. A complete invariant in this case is given by the equivariant Reidemeister trace, $R_G(f)$, which can be considered as an element of $H_0 ^G (L_f M)$. Showing that $R_G(f)$ is a complete invariant is substantially more difficult than doing the same for $R(f)$, in large part due to issues of equivariant transversality. Results of this flavor are proven in \cite{FW}, \cite{won}, \cite{Fer}, \cite{Web}, and \cite{KW2}. In \cite{KW1} and \cite{KW2}, Klein and Williams develop a homotopy theoretic approach to intersection theory, and use it to address the equivariant fixed point problem using equivariant homotopy theory rather than equivariant transversality. They prove:

\medskip

\noindent \textbf{Theorem H.} \cite{KW2} Let $G$ be a finite group, and let $M$ be a compact smooth $G$-manifold such that 
\begin{itemize}
\item $\dim M^H \geq 3$ for all $H$ that appear as isotropy groups in $M$, and
\item for $H < K$ that appear as isotropy groups in $M$, $\dim M^H - \dim M^K \geq 2$.
\end{itemize}
Let $f: M \to M$ be an equivariant map. Then $f$ is equivariantly homotopic to a map without fixed points if and only if $R_G(f)$ is trivial.

\medskip

Here, $M^H$ denotes the points in $M$ fixed by the subgroup $H$.

\medskip

In this paper, we address the fixed point problem for isovariant maps:

\medskip

\noindent \textbf{Question.} Given a compact, smooth $G$-manifold $M$ and an isovariant map $f: M \to M$, when can we find an isovariant map $f_1: M \to M$, isovariantly homotopic to $f$, such that $f_1$ has no fixed points?

\medskip

Motivated by the impressive effectiveness of equivariant homotopy theory in \cite{KW2}, we further the study of isovariant homotopy theory by providing analogues of certain important equivariant results, described below. This allows us to prove:

\medskip
\begin{TheoremIntro}[\ref{thm-finite-G}] Let $G$ be a finite group and suppose that $M$ is a compact smooth $G$-manifold such that
\begin{itemize}
\item $\dim M^H \geq 3$ for all $H$ that appear as isotropy groups in $M$, and
\item For $H < K$ that appear as isotropy groups in $M$, $\dim M^H - \dim M^K \geq 2$
\end{itemize}
Let $f: M \to M$ be an isovariant map. If the fixed points of $f$ can be removed equivariantly (equivalently, if $R_G(f) = 0$), then they can be removed isovariantly.
\end{TheoremIntro}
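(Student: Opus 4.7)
The plan is to combine Theorem H (Klein--Williams) with the isovariant Whitehead theorem and model structure developed earlier in the paper. First, by Theorem H applied to $f$, since $R_G(f)=0$ there exists an equivariant homotopy $F: M \times I \to M$ with $F_0 = f$ and $F_1 = f_1$, where $f_1$ is equivariant and fixed-point-free. The task is to replace $F$ by an \emph{isovariant} homotopy from $f$ to some isovariant, fixed-point-free self-map of $M$.

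To keep the fixed-point-free condition under control, I would reformulate self-maps $g: M \to M$ as sections of $p_1: M \times M \to M$, with $g$ fixed-point-free precisely when the section factors through the open $G$-submanifold $E := (M \times M) \setminus \Delta_M$. The gap hypotheses on $M$ transfer to analogous dimension and codimension hypotheses on $M \times M$ and hence on $E$, so that $E$ remains a $G$-manifold to which the paper's isovariant model structure applies, and for which $M$ and $E$ are both fibrant-cofibrant.

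The core tool is then the comparison between equivariant and isovariant mapping theories encoded in the isovariant Whitehead theorem: under the gap hypotheses, for fibrant-cofibrant $G$-manifolds, equivariant weak equivalences between isovariant maps are already isovariant weak equivalences, so the forgetful map $\imap(M,E) \to \emap(M,E)$ should induce a bijection on those path components which contain an isovariant representative. Applying this comparison to the equivariant section corresponding to $f_1$ yields an isovariant, fixed-point-free $\tilde f_1$, and applying it to the equivariant path $F$ in $\emap(M,M)$ between the isovariant endpoints $f$ and $\tilde f_1$ yields the desired isovariant homotopy removing the fixed points.

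The main obstacle, in my view, is ensuring that the isovariant replacement in the previous paragraph preserves the fixed-point-free property, which amounts to controlling sections of $p_1|_E$ isovariantly and not merely equivariantly. The codimension hypothesis $\dim M^H - \dim M^K \geq 2$ is precisely what permits perturbing an equivariant lift off deeper strata (to make it isovariant) without creating new intersections with $\Delta_M$, via a general-position argument that the isovariant model structure is designed to formalize. Once this comparison is in hand, the theorem follows by stringing together the equivariant removal supplied by Theorem H with the isovariant refinement provided by the model structure.
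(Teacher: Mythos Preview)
Your proposal has a genuine gap at its central step. The isovariant Whitehead theorem (Theorem~\ref{thm-whitehead}) only says that an isovariant map between fibrant--cofibrant objects which is an \emph{isovariant} weak equivalence is an isovariant homotopy equivalence. It says nothing about the forgetful map $\imap(M,E) \to \emap(M,E)$, and in particular it does not imply that equivariantly homotopic isovariant maps are isovariantly homotopic, nor that an equivariant map in the same equivariant component as an isovariant one can be made isovariant. That comparison is essentially a Browder--Straus--type statement, which is not proved in this paper and typically requires substantially stronger gap hypotheses than the ones assumed here. Your ``general-position'' sentence gestures at this, but the geometric model structure does not formalize such a perturbation argument; its weak equivalences are detected by $\imap(\Delta^{\mathbf{H}},-)$, not by any comparison with $\emap$.

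There is also a smaller issue with the section reformulation: an isovariant section of $p_1: M \times M \to M$ corresponds to an \emph{equivariant} self-map of $M$ (since $G_{(x,g(x))} = G_x \cap G_{g(x)} = G_x$ exactly when $G_x \subseteq G_{g(x)}$), not to an isovariant one, so that passage does not encode the condition you need.

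The paper's proof takes a quite different route. It proceeds by induction up the isotropy filtration $M_1 \subset M_2 \subset \cdots \subset M_n = M$. At each stage one has an isovariant homotopy removing fixed points on $M_{k-1}$; the model structure is used only to extend this partial isovariant homotopy to all of $M$ (via Corollary~\ref{Mk-to-M-is-cofib} and Theorem~\ref{thm-mflds-fibrant}). One then needs to push the fixed-point removal across the new stratum $M_k \setminus M_{k-1}$; since all points there have isotropy conjugate to $H_k$, equivariant and isovariant coincide on that stratum, and the relevant obstruction is the local Reidemeister trace $R_G(f|_{M_{k-1},M_k})$. The global-to-local result (Theorem~\ref{thm-KW-global-to-local}) guarantees this vanishes because $R_G(f)=0$. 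So the equivariant input is used stratum-by-stratum where it automatically upgrades to isovariance, rather than through any global comparison of isovariant and equivariant mapping spaces.
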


\medskip

That is, we obtain the result that equivariant fixed point theory and isovariant fixed point theory for such manifolds coincide. The equivariant Reidemeister trace $R_G(f)$ completely determines whether an isovariant self-map of such manifolds can be modified by an isovariant homotopy to a fixed point free map.

\medskip

To obtain the theorem above, we use the homotopical techniques of Klein--Williams in the isovariant setting, which requires the presence of a homotopy theory for the isovariant category. In \cite{sayisvtelm}, the second author constructed a Quillen model structure on the category of $G$-spaces with isovariant maps. While this model structure provides a way to combinatorially represent $G$-spaces as presheaves on a category, we cannot apply the techniques of \cite{KW2} to $G$-manifolds with this homotopy theory, because not all $G$-manifolds are cofibrant. In this paper, we develop a more robust homotopy theory of the isovariant category (for $G$ a finite group), which involves defining new kinds of isovariant cells (see Definition \ref{defn:geometricij}.) With these new isovariant cell structures, we prove

\medskip

\begin{TheoremIntro}[\ref{thm-mflds-cofibrant} and \ref{thm-mflds-fibrant}]
 Let $G$ be a finite group. Smooth $G$-manifolds are built out of isovariant cells, and
   they satisfy lifting properties with respect to isovariant cell inclusions.
\end{TheoremIntro}

Theorem \ref{thm-mflds-fibrant} is closely related to the fact that conically stratified spaces are fibrant, a result of \cite[4.12]{douteaustrat} that relies on Theorem A.6.4 of \cite{lurieHA}. We carry out a similar argument in the equivariant setting, considering smooth $G$-manifolds instead of the more general notion of conically stratified spaces. More work on conically stratified spaces can be found, for example, in \cite{AFT}. 

\medskip

In the presence of a model structure, the theorems above would mean that manifolds are cofibrant and fibrant. While we do not provide a new Quillen model structure on isovariant spaces, we are able to obtain the main results that would follow from a suitable one. These results can be interpreted as arising from a weak model structure in the sense of \cite{Hen}.

\begin{TheoremIntro}
    [\ref{weakmodelstr}] Let $G$ be a finite group. There is a weak model structure on $\itop$ in which smooth $G$-manifolds are cofibrant and fibrant.
\end{TheoremIntro}

Proposition A.1 of Douteau--Waas \cite{DW} indicates that the Quillen model structure on isovariant spaces that we nearly construct in this paper would need to rely on more than the stratification by isotropy. While their counterexample cannot arise in the isovariant setting, we are currently unable to prove that $\jcell \subseteq \mathcal{W}$ (where $\jcell$ and $\mathcal{W}$ are defined in section 3.)  

\medskip

As a result of our study of this more robust isovariant homotopy theory, we are able to prove an isovariant Whitehead's theorem.

\begin{TheoremIntro}[\ref{thm-whitehead}](Isovariant Whitehead's theorem)
Let $G$ be a finite group, let $X$ and $Y$ be smooth $G$-manifolds, and let $f:X \to Y$ be an isovariant map. Then $f$ is an isovariant homotopy equivalence if and only if $f$ is an isovariant weak equivalence (in the sense of Definition \ref{defn:elemIJ}).
\end{TheoremIntro}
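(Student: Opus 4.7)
The plan is to deduce this theorem directly from the two previously established theorems — that smooth $G$-manifolds are fibrant and cofibrant in the geometric model structure — via the standard Whitehead lemma for model categories, and then to identify the model-theoretic notion of homotopy with the classical notion of isovariant homotopy (maps out of $X \times I$ where $I$ carries the trivial $G$-action).

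The ``only if'' direction is straightforward: an isovariant homotopy equivalence induces an honest homotopy equivalence on each fixed subspace $X^H \to Y^H$, and more generally on each stratum. Since the weak equivalences in the geometric model structure are defined via such stratum-level data (after Douteau \cite{douteaustrat}), any isovariant homotopy equivalence is automatically a weak equivalence.

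For the ``if'' direction, suppose $f\colon X \to Y$ is a weak equivalence of smooth $G$-manifolds. By Theorems \ref{thm-mflds-cofibrant} and \ref{thm-mflds-fibrant}, $X$ and $Y$ are both fibrant and cofibrant in the geometric model structure. The model-categorical Whitehead theorem \cite[II.1.10]{goerss-jardine} then produces an isovariant $g\colon Y \to X$ together with model-theoretic homotopies $gf \simeq \mathrm{id}_X$ and $fg \simeq \mathrm{id}_Y$ through some cylinder objects. To turn these into genuine isovariant homotopies, I would exhibit the standard cylinder $X \times I$ (with trivial $G$-action on $I$) as a good cylinder object for $X$ in the geometric model structure. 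Since $X$ is cofibrant and $Y$ is fibrant, homotopies through any two cylinder objects agree up to the equivalence relation of homotopy, so model-theoretic homotopies can then be represented by classical isovariant homotopies $X \times I \to Y$, yielding the desired isovariant homotopy equivalence.

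The main obstacle is verifying the cylinder-object structure on $X \times I$. Factoring the fold map $X \sqcup X \to X$ as $X \sqcup X \hookrightarrow X \times I \to X$, the second map is a weak equivalence because on each fixed subspace the projection $X^H \times I \to X^H$ is a classical homotopy equivalence, and this is exactly what is tested by the stratum-wise definition of weak equivalence. The endpoint inclusion is the heart of the matter: it must be shown to be a cofibration in the geometric model structure. I would attack this by checking the left lifting property against acyclic fibrations stratum by stratum, using that on each stratum one recovers the standard endpoint inclusion of topological spaces (which is a Hurewicz cofibration), and then assembling these into an isovariant cofibration via the presheaf/handle presentation used to build the geometric model structure in the style of \cite{douteaustrat}. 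Once this is in place, the theorem follows formally.
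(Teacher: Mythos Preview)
Your approach is correct and is essentially the paper's one-line proof: smooth $G$-manifolds are fibrant--cofibrant, so the model-categorical Whitehead theorem \cite[II.1.10]{goerss-jardine} applies. Two small corrections are worth making. For the ``only if'' direction, apply the functor $\imap(\Delta^{\mathbf{H}},-)$ directly to the isovariant homotopy equivalence data rather than appealing to fixed-point subspaces $X^H$; the latter is not how weak equivalences are detected in this model structure. Your ``main obstacle'' is not one: since $Y$ is fibrant, left homotopy can be computed with \emph{any} cylinder object for the cofibrant $X$ (\cite[1.2.5]{hovey}), so it suffices that $X\times I\to X$ is a weak equivalence, which is immediate from $\imap(\Delta^{\mathbf{H}},X\times I)\cong \imap(\Delta^{\mathbf{H}},X)\times I$. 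If you still want the cofibration, note that $X\sqcup X\to X\times I$ is $(\emptyset\to X)\Box s_0$, and $(-)\Box s_0$ carries each generating cofibration $s_{n-1}\Box b^{\mathbf{H}}$ to $s_n\Box b^{\mathbf{H}}$; no stratum-by-stratum lifting argument is needed.
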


\medskip

In \cite{dulaschultz}, an isovariant version of Whitehead's theorem was proven for manifolds with ``treelike isotropy structure''; we show that Theorem \ref{thm-whitehead} extends and strengthens Theorem 4.10 of \cite{dulaschultz}. That is, we show

\medskip
\begin{PropositionIntro}[\ref{prop-equiv-DS}]
Let $G$ be a finite group and let $f:X \to Y$ be an isovariant map between smooth $G$-manifolds with treelike isotropy structure satisfying the conditions of Theorem 4.10 in \cite{dulaschultz}. Then $f$ is an isovariant weak equivalence.
\end{PropositionIntro}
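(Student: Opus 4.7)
The plan is to deduce the weak equivalence condition directly from the hypotheses that Theorem 4.10 of \cite{dulaschultz} places on $f$. Since the geometric model structure shares its weak equivalences with the elementary model structure of \cite{sayisvtelm}, I would first recall that a map of isovariant spaces is a weak equivalence there precisely when it induces weak homotopy equivalences on every stratum piece indexed by a subgroup $H \leq G$ (equivalently, after a comparison, on every fixed-point subspace $X^H$, tracking the Weyl group action). Under the treelike hypothesis, these two descriptions can be shown equivalent by downward induction on isotropy order.

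Next, I would unpack the hypotheses of DS 4.10. These include the treelike isotropy structure on both $X$ and $Y$, the requisite dimension and codimension gap conditions on fixed-point subspaces, and the key homotopical input that each restriction $f^H: X^H \to Y^H$ is an ordinary homotopy equivalence. The first two items are structural conditions on the manifolds themselves; the third is exactly the homotopy-theoretic content one would want to establish a weak equivalence. I would then observe that this fixed-point condition is essentially the definition of weak equivalence in the elementary model structure, completing the identification.

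If the elementary weak equivalences are instead formulated in terms of pure strata $X_H = X^H \setminus X^{>H}$, I would convert between the two formulations by downward induction on the partially ordered set of isotropy subgroups: assuming $f$ is a weak equivalence on $X^{>H}$, use the cofiber sequence $X^{>H} \hookrightarrow X^H \twoheadrightarrow X^H/X^{>H}$ (which is well-behaved because the treelike hypothesis makes the inclusion $X^{>H} \hookrightarrow X^H$ a $G$-cofibration with good local structure), together with the fact that $f^H$ is a homotopy equivalence, to conclude that $f$ induces a weak equivalence on $X_H$. The main obstacle I anticipate is precisely this translation between indexing conventions — fixed-point subspaces versus pure strata, and absolute subgroups versus conjugacy classes with Weyl group action — and verifying that treelike isotropy provides enough structural control to ensure the inductive step works without further gap hypotheses beyond those already imposed by DS 4.10.
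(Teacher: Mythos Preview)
Your proposal rests on a mischaracterization of the weak equivalences. In both the elementary and geometric model structures, $f$ is a weak equivalence precisely when $\imap(\Delta^{\mathbf{H}}, f)$ is a weak equivalence of spaces for \emph{every} strictly increasing chain $\mathbf{H} = H_0 < \cdots < H_n$, not merely for length-$0$ chains. The length-$0$ case recovers the stratum maps $f_H : X_H \to Y_H$, but the higher linking simplices $\Delta^{\mathbf{H}}$ encode how the strata fit together, and this information is not determined by the individual strata (or by the fixed-point subspaces $X^H$). In particular, the condition ``$f^H$ is a homotopy equivalence for all $H$'' is the equivariant weak equivalence condition, not the isovariant one; your proposed cofiber-sequence argument would at best move between $f^H$ and $f_H$, and cannot manufacture the link data.

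This is also why you have overlooked the essential content of the Dula--Schultz hypotheses. Their third condition --- that $f$ induces an equivalence on $\partial N^H(X)$, equivalently on the spaces $X_{H<H'}$ for consecutive isotropy pairs --- is exactly the length-$1$ link information $\imap(\Delta^{H<H'}, f)$. The paper's proof uses all three hypotheses: it proceeds by induction on the length of $\mathbf{H}$, identifying $\imap(\Delta^{\mathbf{H}}, X)$ with a homotopy pullback built from $\imap(\Delta^{\mathbf{H}'}, X)$ (for the shorter chain $\mathbf{H}' = H_1 < \cdots < H_n$), the link space $X_{H_0 < H_0'}$, and $X^{H_0'}$, with the treelike hypothesis used to control which $H_0'$ appears and to verify that the relevant evaluation map is a fibration. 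Without invoking the tubular-neighborhood hypothesis and some such inductive decomposition, the argument cannot go through.
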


\medskip

Further, we characterize isovariant weak equivalences between $G$-manifolds in terms of simpler data. That is, instead of checking that $f$ induces a weak equivalence on isovariant mapping spaces of infinitely many higher dimensional simplices, we show that $f$ is an isovariant weak equivalence of $G$-manifolds if it induces a weak equivalence on the isovariant mapping spaces of zero and one dimensional linking simplices.

\subsection*{Structure of the paper.} In section 2, we cover necessary preliminaries, such as background on the category of isovariant spaces and on homotopical fixed point theory. In section 3, we define classes of maps out of which isovariant cell complexes are built, and which govern the homotopy extension and lifting properties we are interested in. We then show that all smooth $G$-manifolds are isovariant cell complexes and satisfy lifting properties with respect to isovariant cell inclusions. We use this to prove the isovariant Whitehead's theorem. In section 4, we apply our results on isovariant homotopy theory to isovariant fixed point theory; we prove that for smooth $G$-manifolds with assumptions as in \cite{KW2}, the isovariant fixed point problem reduces to the equivariant fixed point problem. We also give a counterexample showing that this is not necessarily the case for $G$-spaces which are not manifolds. In section 5, we provide a weak model structure on $\itop$ in which smooth $G$-manifolds are cofibrant and fibrant.

\subsection*{Acknowledgments.} This project was inspired by conversations at a Women in Topology workshop at the Mathematical Sciences Research Institute. We would like to thank Kate Ponto and Cary Malkiewich for suggesting the project and providing support throughout. We would also like to thank Michael Mandell for an illuminating discussion on Theorem \ref{thm-whitehead}, and the anonymous referee whose comments greatly improved the quality of this paper. We would like to thank Peter Haine for introducing us to the notion of weak model categories, and Maru Sarazola for an enlightening talk and useful conversations about fibrantly induced model structures. Some of this paper is based upon work supported by the National Science Foundation under Grant No. 1440140, while the first-named author and the second-named author were in residence at the Mathematical Sciences Research Institute in Berkeley, California, during the semester of Fall 2022 and the Spring 2020 semester, respectively.

\section{Preliminaries}

We assume familiarity with the basics of cofibrantly generated model structures as presented in \cite{hovey}. In particular, we use the following notation for classes of maps with certain lifting properties. Let $\mathcal{I}$ be a class of maps in a category $\mathcal{C}$ containing all small colimits.

$\iinj$ is the class of maps with the right lifting property with respect to every map in $\mathcal{I}$.

$\icof$ is the class of maps with the left lifting property with respect to all maps in $\iinj$. 

$\icell$ is the class of relative $\mathcal{I}$-cell complexes. A relative $\mathcal{I}$-cell complex is a transfinite composition of pushouts of coproducts of elements of $\mathcal{I}$. Note that $\icell \subset \icof$, and in fact, $\icof$ consists of retracts of maps in $\icell$.

\subsection{Isovariance}

Let $G$ be a finite group. We denote by $\etop$ the category of $G$-spaces (compactly generated  spaces with continuous left $G$-action) with equivariant maps, and we denote by $\itop$ the category of $G$-spaces with isovariant maps with an added formal terminal object. (We note that in \cite{sayisvtelm}, this category is denoted $\itop^\triangleright$.)
The category $\etop$ is enriched in spaces, and thus $\itop$ is also enriched in spaces using the subspace topology. We will denote the space of equivariant maps from $X$ to $Y$ by $\emap(X,Y)$ and the space of isovariant maps by $\imap(X,Y)$.

\medskip

Let $s_n:S^n \to D^{n+1}$ be the usual boundary inclusion and denote $i_0:\{0\} \to [0,1]$. 
There is a cofibrantly generated model structure on the category $\topp$ of compactly generated spaces where the generating cofibrations $\cI^\topp$ are given by $\{s_{n}:S^{n} \to D^{n+1}\}$ and generating acyclic cofibrations $\cJ^\topp$ are given by $\{D^n \times i_0:D^n \times \{0\} \to D^n \times [0,1]\}$ \cite[2.4]{hovey}. The family of adjunctions below, where $H$ runs over all subgroups of $G$, allows this model structure to transfer to $\etop$ \cite{stephanelm}. 
\[
\left\{G/H \times -:\xymatrix{\mathsf{Top} \ar@<1ex>[r] & \etop \ar@<1ex>[l] } : \emap(G/H, -) \right \}_{H \leq G}
\]
In particular, the cofibrantly generated model structure on $\etop$ has generating cofibrations given by $\{s_n \times G/H\}$, generating acyclic cofibrations given by $\{D^n \times i_0 \times G/H\}$, and $f:X \to Y$ is a weak equivalence if and only if $\emap(G/H,f)$ is a weak equivalence of spaces for all subgroups $H \leq G$.

\medskip

In \cite{sayisvtelm}, similar techniques are used to transfer the model structure of $\topp$ to $\itop$. We will refer to this model structure as the \emph{elementary model structure}. To define the generating (acyclic) cofibrations and weak equivalences, we need to introduce linking simplices. Because isovariant maps between $G$-spaces preserve the stratification by isotropy groups, we will consider chains of subgroups of $G$. Let $\mathbf{H}=H_0< \cdots<H_n$ be a strictly increasing chain of subgroups of $G$. 

\medskip

Let $\Delta^n$ be the standard $n$-simplex in $\topp$, that is,
\[
\Delta^n = \left\{(t_0, \dots, t_n) \in [0,1]^{n+1} : \sum_{i=0}^n t_i =1 \right\}.
\]

\begin{defn} \label{defn:linkingsimplex}
  The \emph{linking simplex} $\Delta_G^{\mathbf{H}}$ is the quotient of $G \times \Delta^n$ where $(g,x)\sim(g',x)$ if and only if $gH_{k}=g'H_{k}$, when $x=(t_0, \dots, t_{n-k}, 0, \dots, 0)$, $0 \leq k \leq n$. Let $G \times \Delta^n \to \Delta^{H_0< \cdots< H_n}_G$ be the natural projection and denote the image of $(g,x)\in G \times \Delta^n$ under the projection by $\langle g,x\rangle$. The space $\Delta^{\mathbf{H}}_G$ has a left $G$-action given by $g' \cdot \langle g,x \rangle=\langle g'g,x \rangle$; points of the form $\langle g,(t_0, \dots, t_{n-k},0, \dots,0)\rangle$ where $t_{n-k}\neq 0$ are fixed by $gH_kg^{-1}$ under the $G$-action. 
\end{defn}

\begin{ex}\label{ex-G/H}
    If $\mathbf{H} = \{ H_0 \}$, then $\Delta_G^{\mathbf{H}} = G/H_0$. This will also be denoted $\Delta^{H_0}$.
\end{ex}


We note that $\Delta^{H_0< \cdots < H_n}_G$ is the same as the ``equivariant simplex'' $\Delta_k(G;H_n, \dots, H_0)$ defined in \cite{illmanCW}, although in the equivariant simplex, subgroups may be repeated. Illman shows that the equivariant simplex is a compact Hausdorff space with orbit space $\Delta^n$. We will often consider a fundamental domain of a linking simplex, denoted $fd(\Delta^\mathbf{H})$. When $G$ is clear from context, we drop it from the notation.

\medskip

The boundary of a linking simplex, $\partial \Delta^{\mathbf{H}}_G$, is the image of $\partial \Delta^n \times G$ (in the usual sense) under the identifications. This can also be identified with 
\[
\partial \Delta^{\mathbf{H}}_G= \col_{\bullet < \mathbf{H}} \Delta^{\bullet}_G,
\]
where $\bullet < \mathbf{H}$ denotes all proper subchains of $\mathbf{H}$. 
Denote the boundary inclusion of a linking simplex by $b^\mathbf{H}: \partial \Delta^\mathbf{H} \to \Delta^\mathbf{H}$.

\medskip

Let $H \leq G$ be a subgroup. In the equivariant setting, $\emap(G/H,X)$ is equivalent to $X^H$, the subspace of elements which are $H$-fixed. In the isovariant setting, $\imap(\Delta^H,X)$ picks out the subspace of $X$ fixed by exactly $H$, denoted $X_H$.

\medskip

\begin{defn} \label{defn:elemIJ}
The elementary model structure for isovariant spaces has \emph{generating cofibrations} $\mathcal{I}^{elem}$ given by $\{s_n \times \Delta^\mathbf{H}:S^n \times \Delta^\mathbf{H} \to D^{n+1} \times \Delta^\mathbf{H}\}$ and \emph{generating acyclic cofibrations} $\mathcal{J}^{elem}$ given by $\{D^{n+1} \times \Delta^\mathbf{H} \times i_0\}$ (or equivalently $(s_n \Box i_0) \times \Delta^\mathbf{H}$). The \emph{weak equivalences} of $\itop$ are the isovariant maps $f:X \to Y$ such that $\imap(\Delta^\mathbf{H},X) \to \imap(\Delta^\mathbf{H},Y)$ are weak equivalences of spaces for all strictly increasing chains $\mathbf{H}=H_0< \cdots< H_n$. We call these maps \textbf{isovariant weak equivalences}. 
\end{defn}

The formal terminal object $T$ is isovariantly contractible. Thus a map $f:X \to T$ is a weak equivalence if $\imap(\Delta^\mathbf{H},f)$ is a weak equivalence of spaces for each $\mathbf{H}$. The elementary model structure is Quillen equivalent to the category of presheaves on the link orbit category with the projective model structure. For more details, see \cite{sayisvtelm}.

\begin{rem}\label{rem-homotopy-equiv}
An isovariant homotopy equivalence $f$ is in particular an isovariant weak equivalence, since $\imap(\Delta^\mathbf{H},f)$ is a homotopy equivalence for all $\mathbf{H}$.
\end{rem}

One drawback of the elementary model structure on $\itop$ is that not all $G$-manifolds are cofibrant. For example, the 2-sphere with a rotation action (sometimes denoted $S^\lambda$) is not cofibrant.  In section \ref{sec-geom-model-str}, we develop more robust homotopy theoretic tools for the isovariant category, with which $G$-manifolds are particularly well-behaved. In order to define the relevant classes of maps in section \ref{sec-geom-model-str}, we use the pushout-product of maps in $\topp$ with maps in $\itop$. We rely heavily on the relationship between pushout-products and pullback-homs described below.

\medskip

Let $\cC, \sD,$ and $\cE$ be categories with all small colimits, and let $\otimes:\cC \times \sD \to \cE$ be a colimit-preserving functor. Then the pushout-product of a map $f:K \to X$ in $\cC$ and $g:L \to Y$ in $\sD$ is the map $f \Box g$ in $\cE$ from the pushout of the first three terms in the following square to the final vertex.
\[
\xymatrix{
  K \otimes L \ar[r]^{id \otimes g} \ar[d]_{f \otimes id} & K \otimes Y \ar[d]^{f \otimes id} \\
  X \otimes L \ar[r]^{id \otimes g} & X \otimes Y
}
\]
If $\cC$ also has all small limits and there is a functor $\Hom_\sD:\sD^{op} \times \cE \to \cC$ with an adjunction between $-\otimes d$ and $\Hom_\sD(d,-)$ for every object $d \in \sD$, then the pullback-hom $\Hom_{\Box}(g,h)$ of $g:L \to Y$ in $\sD$ and $h:M \to Z$ in $\cE$ is the map in $\cC$ from the initial vertex to the pullback of the last three vertices of the square below.
\[
\xymatrix{
  \Hom(Y,M) \ar[r] \ar[d] & \Hom(L,M) \ar[d] \\
  \Hom(Y,Z) \ar[r] & \Hom(L,Z)
}
\]

An adjunction between pushout-products and pullback-homs is described in \cite[3.2.3]{caryparam}, which yields the following useful relationship between lifts. 

\begin{lemma} \label{lemma:popr-adj-pbhom} For maps as described above, a lift exists in the first diagram below if and only if a lift exists in the second diagram \cite[19.5]{rezkquasi}.
\[
\xymatrix{
  X \otimes L \coprod_{K \otimes L} K \otimes Y \ar[d]_{f \Box g} \ar[r] & M \ar[d]^h \\
  X \otimes Y \ar[r] \ar@{-->}[ur] & Z
}
 \, \, \, \, \, \, \, \ \ 
\xymatrix{
  K \ar[r] \ar[d]^f & \Hom(Y,M) \ar[d]^{\Hom_\Box(g,h)} \\
  X \ar[r] \ar@{-->}[ur] & \Hom(L,M)\times_{\Hom(L,Z)}\Hom(Y,Z)
}
\]
\end{lemma}

\subsection{Homotopical fixed point theory} \label{subsec-fixedpoints}

In \cite{KW1} and \cite{KW2}, Klein and Williams use a homotopy theoretic approach to define fixed point invariants in a way that does not rely on transversality. In this subsection, we review the aspects of their approach that will appear in this paper. Given an equivariant self-map of a compact smooth $G$-manifold $f: M \to M$, the problem of finding an equivariantly homotopic map $f_1: M \to M$ such that $f_1$ has no fixed points reduces to the problem of finding a lift up to equivariant homotopy of
$$id \times f: M \to M \times M$$
to $M \times M - \Delta$, where $\Delta$ denotes the diagonal. Replacing the inclusion map $M \times M - \Delta \to M \times M$ with an equivariant fibration $E' \to M \times M$ and pulling this back along the map $id \times f: M \to M \times M$ yields an equivariant fibration $p : E \to M$. Note that the problem of finding a map $f_1$ as above then transforms into finding a section of the equivariant fibration $p: E \to M$.

\medskip

Denote by $S_M E$ the unreduced fiberwise suspension of $E$ over $M$. That is, 
$$S_M E = M \cup_{E \times 0} (E \times I) \cup_{E \times 1} M.$$
We can consider $S_M E$ as a retractive $G$-space over $M$, with section $M \to S_M E$ given by inclusion into the 0-copy of $M$. The space $M \amalg M$ is a retractive $G$-space over $M$, with section $M \to M \amalg M$ given by the inclusion as the left summand. Then
$$s: M \amalg M \to S_M E$$
given by inclusion of the 0-copy and the 1-copy defines a map of retractive $G$-spaces over $M$.
One of the central ideas of \cite{KW1} and \cite{KW2} converts the problem of finding a section of $p: E \to M$ into the problem of determining whether $s$ is trivial. The following is obtained by combining Proposition 3.1 with Lemmas 10.1 and 10.2 of \cite{KW2}.

\begin{thm}\label{thm-equivt-euler}
Suppose that $M$ is a compact smooth $G$-manifold such that
\begin{itemize}
\item $\dim M^H \geq 3$ for all $H$ that appear as isotropy groups in $M$, and
\item for $H < K$ that appear as isotropy groups in $M$, $\dim M^H - \dim M^K \geq 2$.
\end{itemize}
Then the fibration $p: E \to M$ has an equivariant section if and only if the homotopy class of $s: M \amalg M \to S_M E$ in retractive $G$-spaces over $M$,
$$[s] \in [M \amalg M, S_M E]^G_M ,$$
is trivial. 
\end{thm}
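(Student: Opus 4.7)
The strategy is to interpret the statement within equivariant fiberwise stable homotopy: the fiberwise unreduced suspension $S_M E$ plays the role of the stabilization of $E$ as a retractive $G$-space over $M$, and $s$ is the canonical comparison between the $0$-pole and $1$-pole sections. Triviality of $[s]$ is therefore a stable existence statement, and the dimension/codimension hypotheses are designed precisely to force this stable datum to detect an unstable lift.

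The ``only if'' direction is direct. Given an equivariant section $\sigma:M\to E$, produce a $G$-equivariant fiberwise homotopy from the $0$-inclusion $M\to S_M E$ to the $1$-inclusion by sliding through $\sigma$ along the $I$-factor of $S_M E$. Combined with the constant homotopy on the left copy of $M$, this exhibits $s$ as null-homotopic in retractive $G$-spaces over $M$.

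For ``if'', I would argue by equivariant obstruction theory, extending a section stratum by stratum along the isotropy-type filtration of $M$ from the smallest fixed sets outward. At a stratum of isotropy $H$, the fibers of $p$ are, up to homotopy, complements of a diagonal inside a product of $(\dim M^H)$-manifolds, hence roughly $(\dim M^H - 2)$-connected; the hypothesis $\dim M^H\geq 3$ pushes these fibers into the $1$-connected range. The codimension gap $\dim M^H - \dim M^K\geq 2$ then guarantees that the normal link of a smaller stratum $M^K$ inside $M^H$ is connected, so that extending a partial section across $M^H\setminus M^{>H}$ to all of $M^H$ is governed by a single primary fiberwise obstruction rather than by secondary ones.

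The main obstacle, and the heart of the argument, is the equivariant fiberwise Freudenthal/Blakers--Massey step: showing that under these bounds the assignment from components of the space of equivariant sections of $p$ to $[M\amalg M, S_M E]^G_M$, sending $\sigma$ to the map that is the $0$-inclusion on one copy and $\sigma$ followed by the $1$-suspension on the other, hits exactly the trivial component and lands injectively there. This identifies the stable class $[s]$ with the primary sectioning obstruction stratum by stratum, converting the vanishing $[s]=0$ into an honest equivariant section of $p$.
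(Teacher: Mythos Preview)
The paper does not give its own proof of this theorem; it is quoted from Klein--Williams \cite{KW2} (sections~3 and~10), and the surrounding discussion only sketches the strategy: induction over the filtration $M_1 \subset \cdots \subset M_n$ by isotropy type, local Reidemeister traces $R_G(f|_{M_{k-1},M_k})$ controlling extension across each stratum, a global-to-local splitting (Theorem~E of \cite{KW2}) showing that triviality of $[s]$ forces each local obstruction to vanish, and an equivariant Blakers--Massey argument to pass between the stable class $[s]$ and the unstable sectioning problem. Your outline hits the same beats---the easy direction via sliding along a section, the stratum-by-stratum obstruction theory, the connectivity estimates on fibers, and Blakers--Massey as the bridge between stable and unstable---so the approaches agree.

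That said, your sketch and the paper's sketch are both just that: sketches. The step you flag as ``the main obstacle''---identifying $[s]$ with the collection of primary obstructions, stratum by stratum---is exactly the content of the global-to-local Theorem~E in \cite{KW2}, and neither you nor the paper supplies the argument. If you were to flesh this out, that is where the work lies: one must show that under the dimension and codimension hypotheses the relevant equivariant homotopy cofiber sequences split the global class into local pieces, and that each local piece lives in a group where the suspension map from sections to stable classes is a bijection.
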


Thus, under the above conditions, $[s]$ provides a complete invariant for equivariantly eliminating fixed points. The class $[s] \in [M \amalg M, S_M E]^G_M$ is denoted by $R_G(f)$ and called the equivariant Reidemeister trace of $f$. 
In \cite{KW2}, $R_G(f)$ is considered as an element of $\pi_0 ^G (\Sigma^\infty _+ L_f M)$, via stabilizing to parametrized $G$-spectra and applying Poincar\'e duality. Here $L_f M = \{ \gamma: I \to M : f(\gamma(0)) = \gamma(1) \}$ denotes the twisted free loop space. In this paper, we will not need this stabilized version.

\medskip

The proof of Theorem \ref{thm-equivt-euler} in \cite{KW2} proceeds by induction on the fixed submanifolds $M^H$. Write $(H) < (K)$ if $H$ is properly subconjugate to $K$, and let
$$ (H_1) > ... >(H_n) $$
be a total ordering of the subgroups appearing as isotropy groups of elements of $M$, which extends the subconjugacy order. For $1 \leq k \leq n$, denote by $M_k = \cup_{i \leq k} M^{(H_i)}$ the subspace of $M$ consisting of all points fixed by a subgroup conjugate to some $H_i$, $i \leq k$. For example, $M_1 = M^G$ (if $M^G$ is nonempty) and $M_n = M$.

\medskip

If $f: M \to M$ is an equivariant map that has no fixed points on a $G$-invariant subcomplex $A \subseteq M$, the obstruction for removing its fixed points on a $G$-invariant subcomplex $B \subseteq M$ such that $A \subseteq B$ is a ``local Reidemeister trace" 
$$R_G(f|_{A, B}) \in [C_M (B, A), S_M E]^G _M$$
where 
$$C_M(B,A) = M \cup_{A \times 0} A \times I \cup_{A \times 1} B$$
is a homotopy cofiber of $M \amalg A \to M \amalg B$ over $M$. See Theorem D of \cite{KW2}, Proposition 8.2 and Lemma 8.4 of \cite{Pon}, or Theorem 1.6 of \cite{PonKW}. For example, condition (ii) in Theorem 1.6 of \cite{PonKW} is a Blakers--Massey condition, which is satisfied if $\dim M^H \geq 3$ for all $H$ that appear as isotropy groups in $M$, by the equivariant Blakers--Massey theorem (see, e.g., \cite{Dot}). In the inductive approach of \cite{KW2}, it is therefore crucial to show that the local Reidemeister traces $R_G(f|_{M_{k-1}, M_k})$ vanish.

\medskip

Theorem E, along with Lemmas 10.1 and 10.2 of \cite{KW2}, provides the necessary ``global-to-local" theorem:

\begin{thm}\label{thm-KW-global-to-local}
Suppose that $M$ is a compact smooth $G$-manifold such that
\begin{itemize}
\item $\dim M^H \geq 3$ for all $H$ that appear as isotropy groups in $M$, and
\item for $H < K$ that appear as isotropy groups in $M$, $\dim M^H - \dim M^K \geq 2$.
\end{itemize}
Suppose that $f: M \to M$ is an equivariant map. If $R_G(f)$ is trivial, then $R_G(f|_{M_{k-1}, M_k})$ is trivial for all $k$.
\end{thm}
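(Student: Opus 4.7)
The plan is to exploit the cofiber sequence of retractive $G$-spaces over $M$
\[ M \amalg M_{k-1} \hookrightarrow M \amalg M_k \longrightarrow C_M(M_k, M_{k-1}), \]
which is the very sequence used in the excerpt to define the local trace $R_G(f|_{M_{k-1}, M_k})$. Applying the contravariant functor $[-, S_M E]^G_M$ produces a Puppe-type exact sequence relating the set where the local trace lives to the sets $[M \amalg M_k, S_M E]^G_M$ and $[M \amalg M_{k-1}, S_M E]^G_M$, into which the restrictions of $R_G(f)$ to the filtration strata naturally map. Under the hypotheses $\dim M^H \geq 3$ and $\dim M^H - \dim M^K \geq 2$, the equivariant Blakers--Massey theorem of \cite{Dot}, together with the connectivity input already invoked in Theorem \ref{thm-equivt-euler} and \cite[Thm.~1.6]{PonKW}, ensures this sequence is exact in a range sufficient to make sense of the boundary map as computing the local trace from the restrictions of global classes.

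The inductive step then proceeds by unwinding what $R_G(f) = 0$ means at the level of null-homotopies. A nullification of $s : M \amalg M \to S_M E$ in retractive $G$-spaces over $M$ restricts, along the inclusions $M_{k-1} \hookrightarrow M_k \hookrightarrow M$, to compatible null-homotopies $H_{k-1}$ and $H_k$ of the corresponding restricted sections $s_{k-1}$ and $s_k$. The pair $(H_{k-1}, H_k)$ assembles via the pushout defining $C_M(M_k, M_{k-1}) = M \cup_{M_{k-1} \times 0} M_{k-1} \times I \cup_{M_{k-1} \times 1} M_k$ into a single map $C_M(M_k, M_{k-1}) \to S_M E$ of retractive $G$-spaces over $M$ representing $R_G(f|_{M_{k-1}, M_k})$. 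Because $H_{k-1}$ and $H_k$ come from restricting a single globally defined null-homotopy, this assembled map is itself null, yielding the vanishing of the local trace.

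The main obstacle I anticipate is not the abstract exact sequence but the careful identification that the class obtained by assembling $(H_{k-1}, H_k)$ really is $R_G(f|_{M_{k-1}, M_k})$ as defined by Klein--Williams' parametrized obstruction theory, rather than some shifted or modified variant. This requires a naturality check relating the definition of the local trace via the parametrized cofiber sequence to the global Reidemeister trace restricted along the filtration, and it is here that the dimension hypothesis $\dim M^H \geq 3$ is used to guarantee that equivariant Blakers--Massey identifies the obstruction-theoretic differential with the connecting map of the exact sequence, while the codimension gap $\dim M^H - \dim M^K \geq 2$ is needed to propagate the identifications compatibly along the filtration without interference between adjacent strata. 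Once this naturality is in place the theorem is essentially formal; since the result is attributed to Theorem E of \cite{KW2}, I would follow the structure of that proof and simply specialize its inputs to the fixed-point context via section 10 of the same paper.
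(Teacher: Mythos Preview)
The paper does not actually prove this theorem: it is stated as a citation of Theorem~E of \cite{KW2}, with the specialization to the fixed-point context handled in section~10 of that paper. Your proposal correctly recognizes this in its final sentence, and the sketch you give --- using the cofiber sequence defining $C_M(M_k,M_{k-1})$, the associated Puppe-type exact sequence in $[-,S_ME]^G_M$, and equivariant Blakers--Massey to identify the connecting map with the local trace --- is a reasonable outline of the Klein--Williams argument. Since the paper offers no proof of its own to compare against, there is nothing further to assess beyond noting that your plan is consistent with the cited source.
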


This global-to-local theorem allows for removing fixed points on $M$ inductively, at each step proceeding from $M_{k-1}$ to the subsequent $M_k$. This will also be our method of isovariantly removing fixed points in Section \ref{sec-isvt-fixed-pt}.

\section{Homotopy extension and lifting for isovariant spaces} \label{sec-geom-model-str}
In order to study isovariant fixed point theory for $G$-manifolds and prove an isovariant Whitehead theorem, we will show that manifolds satisfy certain homotopy extension and lifting properties for isovariant maps. This almost amounts to a new Quillen model structure on isovariant spaces, but there are subtle technicalities in building the model structure that we are unable to resolve. We are thankful for work of Douteau--Waas \cite{DW}, whose Proposition A.1 alerted us to the stratified version of this issue. While their counterexample A.4 cannot arise in the isovariant setting, we are currently unable to prove that $\jcell \subseteq \mathcal{W}$ (where $\jcell$ and $\mathcal{W}$ are defined below.)  Nevertheless, as in work of Douteau and Douteau--Waas \cite{douteaustrat, DW} on stratified spaces, we are able to obtain the main results that would follow from a suitable model structure.

\medskip

We will define three classes of maps $\mathcal{I}$, $\mathcal{J}$ and $\mathcal{W}$. The class $\mathcal{I}$ generates the isovariant cell inclusions. In a Quillen model structure, $\mathcal{I}$ would be the class of generating cofibrations, and $\mathcal{J}$ would be the class of generating acyclic cofibrations. Recall that $s_{n}:S^n \to D^{n+1}$ and $b^\mathbf{H}:\partial\Delta^\mathbf{H}_G \to \Delta^\mathbf{H}_G$ are the boundary inclusions, and $\Delta^\mathbf{H}_G$ is the linking simplex of Definition \ref{defn:linkingsimplex}. 

\begin{defn} \label{defn:geometricij} Define $\mathcal{I}$ as the class of pushout-products of $s_{n-1}$ with $b^\mathbf{H}$, that is,

\[
\mathcal{I}=\left\{s_{n-1} \Box b^\mathbf{H}: \left(S^{n-1} \times \Delta^{\mathbf{H}}_G \right) \cup_{S^{n-1}\times \partial \Delta_G^{\mathbf{H}}} \left(D^n \times \partial \Delta^{\mathbf{H}}_G \right) \to D^n \times \Delta^{\mathbf{H}}_G\right\}_{\mathbf{H},n}
\]
Define $\mathcal{J}$ as pushout-products of the maps in $\mathcal{I}$ with $i_0:\{0\} \to [0,1]$, that is,
\[
\mathcal{J}=\left\{s_{n-1} \Box b^\mathbf{H} \Box i_0\right\}_{\mathbf{H},n}
\] 
Let $\mathcal{W}$ be the class of isovariant weak equivalences, that is, the maps $f:X \to Y$ such that the induced map $\imap(\Delta^\mathbf{H},X) \to \imap(\Delta^{\mathbf{H}},Y)$ is a weak equivalence of spaces for all strictly increasing chains of subgroups $\mathbf{H}=H_0<\cdots<H_n$.
\end{defn} 

\medskip

We will prove a series of lemmas about the classes $\mathcal{I}, \mathcal{J}$ and how they interact with the isovariant weak equivalences $\mathcal{W}$. This will enable us to prove an isovariant Whitehead's theorem, as well as results on isovariant fixed point theory in section 4. 

\medskip

We begin by showing that cofibrations in the elementary model structure are built out of maps in $\mathcal{I}$, that is, $\mathcal{I}^{elem} \subseteq \icell$, and that acyclic cofibrations in the elementary model structure are built out of maps in $\mathcal{J}$, that is, $\mathcal{J}^{elem} \subseteq \jcell$ (see Definition \ref{defn:elemIJ}). First, given a poset $\mathcal{P}$ of chains of subgroups of $G$, define $\mathcal{P} \Delta$ to be the colimit $\col_{\mathbf{H} \in \mathcal{P}} \Delta^\mathbf{H}$. We say that $\mathcal{P}$ is closed under inclusion if whenever $\mathbf{H} \in \mathcal{P}$ and $\mathbf{K}$ is a subchain of $\mathbf{H}$, then $\mathbf{K} \in \mathcal{P}$. For example, if $\mathbf{H}$ is a chain of subgroups, then the poset $\mathcal{P}_\mathbf{H}$ of all subchains of $\mathbf{H}$ is closed under inclusion, and $\mathcal{P}_\mathbf{H} \Delta = \Delta ^\mathbf{H}$.

\begin{lemma}\label{lem-eltry-cof}
\begin{enumerate}
    \item For every $\mathcal{P}$ closed under inclusion, the maps $s_n \times \mathcal{P} \Delta: S^n \times \mathcal{P} \Delta \to D^{n+1} \times \mathcal{P} \Delta$ are in $\icell$. That is, they are obtained by composition of pushouts of coproducts of elements of $\mathcal{I}$.
    \item For every $\mathcal{P}$ closed under inclusion, the maps $D^n \times i_0 \times \mathcal{P} \Delta: D^n \times 0 \times \mathcal{P} \Delta \to D^n \times [0,1] \times \mathcal{P} \Delta$ are in $\jcell$. That is, they are obtained by composition of pushouts of coproducts of elements of $\mathcal{J}$.
\end{enumerate}
\end{lemma}

In particular, $s_n \times \Delta^\mathbf{H} \in \icell$ and $D^n \times i_0 \times \Delta^\mathbf{H} \in \jcell$.

\begin{proof}
We will prove (1) by induction on the lengths of the chains in $\mathcal{P}$. If all chains in $\mathcal{P}$ are of length 0, then 
$$\mathcal{P} \Delta = \Delta^{H_0} \amalg ... \amalg \Delta^{H_m}.$$
If $\mathbf{H} =  \{ H_0 \}$ has length 0, then $s_n \times \Delta^{H_0}$ is equal to $s_n \Box b^{H_0}$, so it is in $\mathcal{I}$ for all $n$. The coproduct of maps in $\mathcal{I}$ is in $\icell$, therefore $s_n \times \mathcal{P} \Delta \in \icell$.

\medskip

Now suppose the statement is true for $\mathcal{P}$, and let $\mathbf{K}$ be a chain of subgroups which is not in $\mathcal{P}$. Define $\mathcal{P}' = \mathcal{P} \cup  \{ \mathbf{K} \}$. Note that $\mathcal{P}' \Delta$ is the pushout of the diagram

$$\xymatrix{
\col_{\mathbf{H} \in \mathcal{P}} \Delta^{ \mathbf{H} \cap \mathbf{K}} \ar[r]^-f \ar[d] & \mathcal{P} \Delta \\
\Delta^\mathbf{K}
}$$

We may assume that all proper subchains of $\mathbf{K}$ are in $\mathcal{P}$, and therefore this diagram becomes

$$\xymatrix{
\partial \Delta^\mathbf{K} \ar[r]^-f \ar[d]^{b^\mathbf{K}} & \mathcal{P} \Delta \\
\Delta^\mathbf{K}
}$$

Multiplying by $S^n$ and $D^{n+1}$, we obtain the map of pushout diagrams

$$\xymatrix{
S^n \times \Delta^\mathbf{K} \ar[d]^{s_n \times \Delta^\mathbf{K}}    & \ar[l]^-{S^n \times b^\mathbf{K}}  S^n \times \partial \Delta^\mathbf{K} \ar[r]^-{S^n \times f} \ar[d]^{s_n \times \partial \Delta^\mathbf{K}}  & S^n \times \mathcal{P} \Delta \ar[d]^{s_n \times \mathcal{P} \Delta} \\
D^{n+1} \times \Delta^\mathbf{K}    & \ar[l]^-{D^{n+1} \times b^\mathbf{K}}  D^{n+1} \times \partial \Delta^\mathbf{K} \ar[r]^-{D^{n+1} \times f}  & D^{n+1} \times \mathcal{P} \Delta 
}$$

The map $s_n \Box b^\mathbf{K}$ is in $\icell$ by definition, and $s_n \times \mathcal{P} \Delta \in \icell$ by the induction hypothesis. Therefore by Corollary 2.2.2 of \cite{caryparam}, the map between the pushouts, $s_n \times \mathcal{P}' \Delta$, is also in $\icell$, as required.

\medskip 

The proof of (2) is similar, using the fact that $D^n \times i_0$ is  $s_{n-1} \Box i_0$ composed with a homeomorphism.
\end{proof}

Next, we prove a lemma about cubical limits of maps, which will be applied in the proof of Lemma \ref{lem-I-inj} to $\cI^\topp$-$\mathsf{inj}$. Recall that an \emph{$S$-cube} in a category $\cC$ is a functor $\cX:\cP(S)\to \cC$, where $S$ is a finite set and $\cP(S)$ is the poset of all subsets of $S$. The finite set $\underline{n}=\{1, 2, \dots, n\}$ will define an \emph{$n$-cube}. For an $S$-cube $\cX$, and subsets $U \subset T \subset S$, we use $\partial_U^T\cX$ to denote the $(T-U)$-cube $\{V \mapsto \cX(V \cup U):V \subset T-U\}$. For a subset $U \subset S$, we define the \emph{$U$-corner map} of $\cX$ to be the restriction map
$$\lim_{U \subset T} \cX (T)  \to \lim_{U \subsetneqq T} \cX(T)$$
and $\lim_{U \subset T} \cX (T)=\cX (U)$.
For the following lemma, note that a map of $n$-cubes $\cX \to \cY$ defines an $(n+1)$-cube.

\begin{lemma} \label{cubelimitsfibration}
  Let $\cC$ be a category with all limits and a class of maps $\cF$ which are preserved by pullbacks and compositions. Assume also that isomorphisms are in $\cF$. Let $\cX \to \cY$ be a map of cubical diagrams in $\cC$ such that the corner maps of all possible subcube maps $\cX' \to \cY'$ are in the class $\cF$. Then $\lim \cX \to \lim \cY$ is in the class $\cF$.
\end{lemma}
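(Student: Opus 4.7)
I would proceed by induction on $n := |S|$, strengthening the inductive statement to include both the conclusion $\lim \cX \to \lim \cY \in \cF$ and the parallel fact that $\mathrm{punct}(\cX) \to \mathrm{punct}(\cY) \in \cF$, where $\mathrm{punct}(\cW) := \lim_{\emptyset \subsetneq T} \cW(T)$. The base case $n = 0$ is immediate: the map $\cX \to \cY$ is itself a $1$-cube whose corner map equals $\lim \cX \to \lim \cY$, so it lies in $\cF$ by the hypothesis applied to the whole cube, while the punctured limits are terminal objects and the induced map between them is an isomorphism (hence in $\cF$).

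For the inductive step with $n \geq 1$, view $\cX \to \cY$ as an $(n+1)$-cube $\cZ$ on $\cP(S \cup \{*\})$. Splitting the indexing poset of its punctured limit according to whether subsets contain $*$ gives
$$\lim_{\emptyset \subsetneq T \subset S \cup \{*\}} \cZ(T) \;\cong\; \mathrm{punct}(\cX) \times_{\mathrm{punct}(\cY)} \cY(\emptyset),$$
so that the natural transformation at $\emptyset$ factors as
$$\cX(\emptyset) \xrightarrow{\,c_\cZ\,} \mathrm{punct}(\cX) \times_{\mathrm{punct}(\cY)} \cY(\emptyset) \xrightarrow{\,\mathrm{pr}\,} \cY(\emptyset),$$
with $c_\cZ$ the corner map of $\cZ$ (in $\cF$ by hypothesis) and $\mathrm{pr}$ the pullback of $q : \mathrm{punct}(\cX) \to \mathrm{punct}(\cY)$ along the corner map of $\cY$. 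Closure under pullbacks and composition reduces the main claim to $q \in \cF$.

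To handle $q$, I would fix $s \in S$ and let $\cX_0,\cX_1$ be the $(n-1)$-cubes obtained by restricting $\cX$ to subsets of $S$ not containing, respectively containing, $s$ (and similarly for $\cY$). A symmetric splitting yields $\mathrm{punct}(\cX) \cong \mathrm{punct}(\cX_0) \times_{\mathrm{punct}(\cX_1)} \cX(\{s\})$, and then $q$ decomposes as a composite of pullbacks of the three maps $\mathrm{punct}(\cX_0) \to \mathrm{punct}(\cY_0)$, $\mathrm{punct}(\cX_1) \to \mathrm{punct}(\cY_1)$, and $\cX(\{s\}) \to \cY(\{s\})$. The first two are in $\cF$ by the strengthened inductive hypothesis applied to $\cX_i \to \cY_i$, whose subcube corner-map hypotheses are inherited since every subcube of $\cX_i$ is itself a subcube of $\cX$; the third is in $\cF$ as a subcube corner map. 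Closure of $\cF$ under pullbacks and composition then gives $q \in \cF$, completing the induction.

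The principal obstacle is that $\mathrm{punct}(\cX)$ is a limit over a non-cubical poset, so the main inductive statement does not apply to it directly. Strengthening the inductive claim to carry the punctured-limit version along in parallel, and carefully verifying that corner-map hypotheses for $\cX \to \cY$ restrict to those for each $\cX_i \to \cY_i$, is the key technical bookkeeping needed to make the argument go through.
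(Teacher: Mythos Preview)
Your overall inductive strategy---strengthening the hypothesis to carry along the punctured-limit statement---is a legitimate alternative to the paper's approach. The paper instead gives a direct tower factorization $\lim\cX = L_{2^n-1} \to \cdots \to L_0 \to \lim\cY$ through intermediate limits $L_i = \lim(\cX_i \to \cY)$, obtained by adjoining vertices of $\cX$ one at a time in an order refining the poset, and shows each step is a pullback of a subcube corner map. Your approach is more conceptual; the paper's is more hands-on but avoids the auxiliary punctured statement.

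There is, however, a genuine gap in your decomposition of $q$. You assert that $q$ factors as a composite of pullbacks of the three maps $\mathrm{punct}(\cX_0)\to\mathrm{punct}(\cY_0)$, $\mathrm{punct}(\cX_1)\to\mathrm{punct}(\cY_1)$, and $\cX(\{s\})\to\cY(\{s\})$. But a map of pullbacks $A\times_C B \to A'\times_{C'} B'$ is \emph{not} generally a composite of pullbacks of the three vertical maps, even when all three lie in $\cF$: with $\cF$ the surjections of sets, take $A=\{0\}\hookrightarrow C=\{0,1\}\hookleftarrow\{1\}=B$ mapping to $A'=C'=B'=\{*\}$; the induced map on pullbacks is $\emptyset\to\{*\}$. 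The problematic step is passing from a fiber product over $P_1=\mathrm{punct}(\cX_1)$ to one over $Q_1=\mathrm{punct}(\cY_1)$; this is a pullback of the diagonal $P_1\to P_1\times_{Q_1}P_1$, not of $P_1\to Q_1$.

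The repair is to replace your second and third maps by the corner map of the subcube $\cX_1\to\cY_1$, namely $\cX(\{s\})\to \mathrm{punct}(\cX_1)\times_{\mathrm{punct}(\cY_1)}\cY(\{s\})$, which lies in $\cF$ directly by hypothesis. Writing $P_i,Q_i,X_s,Y_s$ for the objects involved, the factorization
\[
P_0\times_{P_1}X_s \;\longrightarrow\; P_0\times_{Q_1}Y_s \;\longrightarrow\; Q_0\times_{Q_1}Y_s
\]
has first map a pullback of that corner map and second map a pullback of $\mathrm{punct}(\cX_0)\to\mathrm{punct}(\cY_0)$, which is in $\cF$ by your strengthened inductive hypothesis. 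With this correction your argument goes through; note you then only need the punctured-limit part of the induction for $\cX_0\to\cY_0$, not for $\cX_1\to\cY_1$.
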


We note that the corner map condition in the lemma is slightly weaker than the fibration cube condition in Definition 1.13 of \cite{G2}, since we do not require corner maps of subcubes of $\cX$ or $\cY$ to be in $\cF$. We are unaware of a proof of this lemma in the literature, so we provide one now. 

\begin{proof}
  We will factor the map $\lim \cX \to \lim \cY$ through maps of $\cF$. Let $\cX$ and $\cY$ be cubes in $\cC$ indexed by the poset $\cP (\underline{n})$ on $\underline{n}=\{1,2, \dots, n\}$. 
  
  \medskip

Let $\cX_0$ denote the final vertex $\cX(\underline{n})$ of the cube $\cX$, and let $\cX_i$ denote the diagram $\cup_{j=1}^i \partial_{\underline{n}-\{j\}}^{\underline{n}} \cX$ for $1 \leq i \leq n$. For example, $\cX_n$ is the full subdiagram of $\cX$ containing all vertices of distance at most 1 from the final vertex. We continue this process to define $\cX_i$ for all $n <i \leq 2^n-1$ by incrementally adding vertices of distance $2, 3, \dots,$ and finally $n$ from the final vertex. We apply the same vertex ordering to define $\cY_i$ for all $0 \leq i \leq 2^n-1$. Note this process extends the partial order on the poset $\cP(\underline{n})$ to a total order, although we do not add extra maps to the associated domain category of the diagrams. 

\medskip

We define $L_i$ for $0 \leq i \leq 2^n-1$ to be the limit of the diagram containing $\cX_i$ and $\cY$ and the maps between them (that is, the diagram maps $\cX_i \to \cY_i$). 
We denote this by $L_i=\lim(\cX_i \to  \cY)$. 
The limit $L_i$ can be rewritten as the pullback of the following diagram for $i\geq 1$:
   \[
  \xymatrix{ & \lim(\cX_i \to \cY_i) \ar[d]^{\theta_i} \\ L_{i-1} \ar[r] & \lim(\cX_{i-1} \to \cY_i)}
  \]
  The map $L_i \to L_{i-1}$ is in $\cF$ if the right vertical map $\theta_i$ is in $\cF$. 
  For $i=0$, $L_0$ is the pullback of the diagram
  \[
   \xymatrix{ & \cX_0 \ar[d]^{\theta_0} \\ \lim \cY \ar[r] & \cY_0}
   \]
   Thus the map $L_0 \to \lim \cY$ is in $\cF$ if $\cX_0 \to \cY_0$ is in $\cF$.
   
   \medskip
  
  We will now show that the right vertical maps $\theta_i$ of the previous diagrams are in $\cF$. First, $\theta_0$ is the corner map of the subdiagram $\cX_0 \to \cY_0$.
  
  \medskip

  Adding the $j$th vertex, $\cY(\underline{n}-U)$ (where $U$ depends on the total order chosen above), to the diagram $\cY_{j-1}$ completes the $|U|$-cube $\partial^{\underline{n}}_{\underline{n}-U} \cY$ (a subdiagram of $\cY_j$). Then $\cX_j -\cX_{j-1}=\cX(\underline{n}-U)$ is the limit of the diagram $\partial^{\underline{n}}_{\underline{n}-U} \cX \to \partial^{\underline{n}}_{\underline{n}-U} \cY$. The map $\theta_j:\lim(\cX_j \to \cY_j) \to \lim(\cX_{j-1} \to \cY_j)$ is the pullback of the map from $\cX(\underline{n}-U)$ to the punctured cube $(\partial^{\underline{n}}_{\underline{n}-U} \cX)_p \to \partial^{\underline{n}}_{\underline{n}-U} \cY$, where $\cZ_p$ indicates removing the initial vertex from the cube $\cZ$. Since the latter map is a corner map for a subcube $\cX' \to \cY'$, it is in $\cF$ by hypothesis. Since $\cF$ is preserved by pullbacks, $\theta_j \in \cF$, and thus $L_j \to L_{j-1}$ is also in $\cF$. 
  
  \medskip

   Finally, $L_{2^n-1}=\lim(\cX \to \cY)=\lim(\cX)$, so we have factored the map $\lim \cX \to \lim \cY$ as a composition of maps of $\cF$
   \[
   \lim \cX =L_{2^n-1} \to \cdots \to L_1 \to L_0 \to \lim \cY.
   \] \end{proof}

We note that if $\cX$ and $\cY$ are punctured cubes (with initial vertex missing), we can complete them into cubical diagrams by setting 
$$\cX (\emptyset) = \lim_{U \neq \emptyset} \cX (U)$$
and similarly for $\cY$. Then the corner map of the whole cube map $\cX \to \cY$ is an isomorphism, and is thus in $\cF$.

\medskip
We will now continue to prove results about the classes of maps $\mathcal{I}$, $\mathcal{J}$, and $\mathcal{W}$, and the extension and lifting properties that they satisfy.  Recall that $\icell$ is the collection of morphisms obtained by transfinite composition of pushouts of coproducts of elements of $\mathcal{I}$, $\iinj$ is the collection of morphisms with the right lifting property with respect to elements of $\mathcal{I}$, and $\icof$ is the collection of morphisms with the left lifting property with respect to maps in $\iinj$.  

\begin{lemma}\label{lem-I-inj}
The classes $\mathcal{I}$, $\mathcal{J}$, $\mathcal{W}$ satisfy $\mathcal{W} \cap \jinj \subseteq \iinj$.
\end{lemma}

\begin{proof}
    Let $f:X \to Y$ be in $\mathcal{W} \cap \jinj$. We will show $f \in \iinj$, that is, $f$ has the right lifting property with respect to $s_n \Box b^\mathbf{H}$ for all $\mathbf{H}$ and all $n$ by inducting on the length of $\mathbf{H}$. 

\medskip

Let $\mathbf{H}=\{ H_0 \}$ be a chain of length 0. By part (2) of Lemma \ref{lem-eltry-cof}, the generating acyclic cofibrations in the elementary model structure are in $\jcell$. Thus $\jinj \subseteq \cJ^{elem}$-$\mathsf{inj}$, so $f\in \cW \cap \jinj \subseteq \cW \cap \cJ^{elem}$-$\mathsf{inj}=\cI^{elem}$-$\mathsf{inj}$. Thus $f$ has the right lifting property with respect to $s_n \times \Delta^{H_0} =s_n \Box b^{H_0}$. 

\medskip

Suppose for induction that for all $\mathbf{K}=K_0 < \cdots < K_\ell$ of length less than $m$, $f$ has the right lifting property with respect to $s_n \Box b^\mathbf{K}$. Suppose $\mathbf{H}=H_0< \cdots< H_m$ has length $m$. 

\medskip

In the adjunction of pushout-product with pullback-hom from Lemma \ref{lemma:popr-adj-pbhom}, let $\cC=\topp$ and $\sD=\cE=\itop$, let $\otimes$ be the cartesian product and $\Hom_\sD=\imap$ . We will show $f$ has the right lifting property with respect to $s_n \Box b^\mathbf{H}$ by showing that the map of spaces 
\[
\imap_{\Box}(b^\mathbf{H},f): \imap(\Delta^\mathbf{H},X) \to \imap(\Delta^\mathbf{H},Y) \times_{\imap(\partial \Delta^\mathbf{H}, Y)} \imap(\partial \Delta^\mathbf{H}, X)
\]
 is in $\mathcal{I}^\topp$-$\mathsf{inj}$ $=\mathcal{W}^\topp \cap \mathcal{J}^\topp$-$\mathsf{inj}$. 

\medskip

 Since $f$ has the right lifting property with respect to $s_n \Box b^\mathbf{J} \Box i_0$ for all $\mathbf{J}$ (of any length) and all $n$, the map $\imap_{\Box}(b^\mathbf{H},f)$ has the right lifting property with respect to $s_n \Box i_0 \cong D^{n+1} \times i_0 \in \mathcal{J}^{\topp}$. That is, $\imap_{\Box}(b^\mathbf{H},f)$ is in $\cJ^{\topp}$-$\mathsf{inj}$. We need only show that $\imap_{\Box}(b^{\mathbf{H}},f)\in \cW^\topp$.
 
 \medskip

Since the map $\imap(\Delta^\mathbf{H},f) \in \mathcal{W}$ is factored by $\imap_{\Box}(b^\mathbf{H},f)$, the 2-out-of-3 property for weak equivalences implies $\imap_{\Box}(b^\mathbf{H},f)$ is a weak equivalence in $\topp$ if the left vertical map of the following pullback is a weak equivalence in $\topp$.
\[
\xymatrix{
  \imap(\Delta^\mathbf{H},Y) \times_{\imap(\partial \Delta^\mathbf{H}, Y)} \imap(\partial \Delta^\mathbf{H}, X) \ar[r] \ar[d] & \imap(\partial \Delta^\mathbf{H}, X) \ar[d]^-{\imap(\partial \Delta^\mathbf{H}, f)} \\
  \imap(\Delta^\mathbf{H},Y) \ar[r] & \imap(\partial \Delta^\mathbf{H}, Y)
}
\]

It suffices to show that the right vertical map $\imap(\partial \Delta^\mathbf{H}, f)$ is an acyclic fibration of topological spaces (that is, $\imap(\partial \Delta^\mathbf{H}, f) \in \cI^\topp$-$\mathsf{inj}$). We will use Lemma \ref{cubelimitsfibration} and the fact that 
$\partial \Delta^\mathbf{H}=colim_{\bullet<\mathbf{H}} \Delta^\bullet$. 

\medskip

 Let $\cF=\cI^\topp$-$\mathsf{inj}$, which is closed under pullbacks and composition and contains isomorphisms. 
Let $\cX$ be the punctured cube defined by $\{\mathbf{K} \mapsto \imap(\Delta^{\mathbf{K}}, X)\}$ and let $\cY$ be the punctured cube defined by $\{\mathbf{K} \mapsto \imap(\Delta^{\mathbf{K}}, Y)\}$ where $\mathbf{K}$ ranges over the proper subchains of $\mathbf{H}$ (including the empty subchain). The map of cubical diagrams $\cX \to \cY$ is given by $\imap(\Delta^{\mathbf{K}}, f)$. Because each $\imap_{\Box}(b^\mathbf{K},f) \in \cI^\topp$-$\mathsf{inj}$ by the induction hypothesis, all corner maps of subcube maps $\cX' \to \cY'$ are in $\cF$. Then by the lemma, $\lim \cX \to \lim \cY$ is an acyclic fibration of spaces, where $\lim \cX=\lim_{\bullet<\mathbf{H}} \imap(\Delta^{\bullet},X)=\imap(\partial\Delta^{\mathbf{H}},X)$. We have shown that $\imap(\partial\Delta^\mathbf{H},f)$ is an acyclic fibration of spaces, so $\imap_{\Box}(b^\mathbf{H},f)$ is an acyclic fibration of spaces, ensuring $f$ has the right lifting property with respect to $s_n \Box b^\mathbf{H}$. 
\end{proof}

\subsection{Manifolds are built out of isovariant cells}

We will now show that manifolds are built out of maps in $\mathcal{I}$. That is, we will show that $\emptyset \to M \in \icell$ for all smooth $G$-manifolds $M$. This is not true in the elementary model structure.

\begin{ex}
Consider the disk $D^2$ with the $C_2$-action which rotates the disk around the origin by $\pi$. We can build this as a $C_2$-isovariant cell complex with two mixed cells and one free cell: $D^0 \times \Delta^{e<G}$, $D^1 \times \Delta^e$, and $D^1 \times \Delta^{e<G}$, where the free $1$-cell is glued to $x$ and $\tau x$ and the mixed $1$-cell is glued by collapsing the entire $D^1 \times \Delta^G$ to $z$ and sending $\{0\} \times fd (\Delta^{e<G})$ to $a$ and $\{1\} \times fd(\Delta^{e<G})$ to $\tau a$. 

\begin{center}
\begin{tikzpicture}[scale=.65]
  \filldraw[fill=black!20] (0,0) circle (2cm);
   \draw (0,2)--(0,-2);
  \filldraw[blue] (0,0) circle (1.3pt);
  \filldraw (0,2) circle (1.3pt);
  \filldraw (0,-2) circle (1.3pt);
  \node[below] at (0,-2) {$\tau x$};
  \node[above] at (0,2) {$x$};
  \node[right] at (0,1) {$a$};
  \node[left] at (0,-1) {$\tau a$};
  \node[right, blue] at (0,0) {$z$};
\end{tikzpicture}
\end{center}
\end{ex}

This is an example of a $C_2$-space for which $\emptyset \to D^2 \in \icell$, but it is not cofibrant in the elementary model structure on $\itop$, so $\emptyset \to D^2 \not \in \mathcal{I}^{elem}$-$\mathsf{cell}$.

\medskip

To prove that $\emptyset \to M \in \icell$ , we show that an equivariant triangulation gives an isovariant cell structure. In \cite{illmanCW}, Illman proves that smooth $G$-manifolds are cofibrant in $\etop$. We build on his proof in this section. Illman defines the equivariant simplex $\Delta_n(G;H_0, \dots, H_n)$ the same way we define the linking simplex $\Delta^{H_n< \cdots < H_0}_G$, but he allows redundancies in the subgroups. Illman's key result is Theorem 4.1, which proves that if $X$ is a $G$-space with a homeomorphism $u:\Delta^n \to X/G$ which has constant isotropy type in each of the sets $\Delta^m -\Delta^{m-1}$ for $0\leq m \leq n$, then there exist closed subgroups $H_0\geq  \cdots \geq H_n$ of $G$ and a $G$-homeomorphism $\alpha: \Delta_n(G; H_0, \dots, H_n) \to X$ which induces $u$ on orbits. 
  
  \medskip
  
  An \emph{equivariant triangulation} on a $G$-space $X$ consists of a triangulation $t:K \to X/G$ of the orbit space such that for each $n$-simplex $s$ of the simplicial complex $K$, there are closed subgroups $H_0, \dots, H_n$ of $G$ and a $G$-homeomorphism $\alpha:\Delta_n(G;H_0, \dots, H_n) \to \pi^{-1}(t(s))$ which induces a linear homeomorphism on orbit spaces (\cite[5.1]{illmanCW}). Illman shows in Proposition 6.1 that if a $G$-space $X$ has an equivariant triangulation, then it has the structure of a $G$-CW complex. That is, $X$ is cofibrant in $\etop$ and can be built from the equivariant generating cofibrations. 
  Illman cites \cite{veronastrat} to prove that if $M$ is a smooth $G$-manifold, then $M$ has an equivariant triangulation. 

\begin{thm}\label{thm-mflds-cofibrant}
  Let $G$ be a finite group. Smooth $G$-manifolds $M$ satisfy $\emptyset \to M \in \icell$.
 \end{thm} 

 \begin{proof}
  We will prove the more general fact that a $G$-space $X$ with an equivariant triangulation satisfies $\emptyset \to X \in \icell$. Let $X$ be a $G$-space with an equivariant triangulation $t:K \to X/G$. Consider the $n$-skeleton of $K$, denoted $K^n$. 
  The $n$-skeleton of the isovariant space $X$ is $\pi^{-1}(t(K^n))$, where $\pi:X \to X/G$ is the quotient to the orbit space. 
  
  \medskip

  The goal is to construct an isovariant characteristic map for each equivariant simplex $\pi^{-1}(t(s))$ of $X$. Let $s$ be an arbitrary $n$-simplex of $K$, and let $\alpha: \Delta_n(G;H_0, \dots, H_n) \to \pi^{-1}(t(s))$ be the $G$-homeomorphism arising from the definition of equivariant triangulation. The map $\alpha$ is also an isovariant homeomorphism  because it is an injective equivariant map with injective equivariant inverse. Then $\alpha^\ast:\Delta^n \to t(s)$ is a linear homeomorphism with $\alpha^\ast(\partial {\Delta}^n)=t(\partial {s})$ and $\alpha^\ast(\mathring{\Delta}^n)=t(\mathring{s})$, where $\partial {s}$ is the boundary of $s$ and $\mathring{s}$ is its interior. In the isovariant category, we have
  \[
\xymatrix{
  \partial{\Delta}_n(G;H_0, \dots, H_n) \ar[r]^(.6)\cong \ar[d] & \pi^{-1}(t(\partial s)) \ar[d] \\
  \Delta_n(G;H_0, \dots, H_n) \ar[r]^(.6)\cong & \pi^{-1}(t(s))
}
  \]

  We will show that the left vertical map is a pushout along some $s_m \Box b^\mathbf{K} \in \cI$. We will define an isovariant map $\phi: (\prod \Delta^{n_i}) \times \Delta^\mathbf{K} \to \Delta_n(G;\mathbf{H})$ by defining $\phi$ on fundamental domains of $\Delta_n(G; \mathbf{H})$ and $\Delta^\mathbf{K}$, then extending equivariantly. The map $\phi$ will restrict to an appropriate map on boundaries which produces the desired pushout, because $\prod \Delta^{n_i} \cong D^m$.
  
\medskip

  Let $\mathbf{H}=H_0\geq \cdots \geq H_n$ be the chain of isotropy groups of $\Delta_n(G;H_0, \dots, H_n)$, and let $\mathbf{K}=K_0> K_1> \cdots > K_k$ be the same chain but with no degeneracies, that is if $H_i=H_{i+1}$ in $\mathbf{H}$, $H_i$ is only listed once in $\mathbf{K}$. Let $p: \mathbf{n} \to \mathbf{k}$ be the ordered surjection recording the collapse from $\mathbf{H}$ to $\mathbf{K}$, that is, $H_j$ is the same for each $j \in p^{-1}(i)$ for all $0 \leq i \leq k$. (Here $\mathbf{n}=\{0,1, \dots, n\}$.)
  
  \medskip

  Let $v_{K_i}$ be the vertex of a fundamental domain of $\Delta^\mathbf{K}$ which has isotropy group $K_i$. Denote the vertices of $\Delta^{n_i}$ by $v_0, ..., v_{n_i}$, and for each $0 \leq i \leq k$, let $0 \leq \ell_i \leq |p^{-1}(i)|-1$. In a fundamental domain of $\Delta_n(G;\mathbf{H})$, denote the $i$th vertex by $w_i$.
  
  \medskip

Let $\phi$ be the map 
\[
\Delta^{|p^{-1}(0)|-1} \times \cdots \times \Delta^{|p^{-1}(k)|-1} \times fd(\Delta^{\mathbf{K}}) \to fd(\Delta_n(G; \mathbf{H}))
\]

defined on vertices by 
\[
(v_{\ell_0}, v_{\ell_1}, \dots, v_{\ell_k}, v_{K_i})\mapsto w_{min\{p^{-1}(i)\}+\ell_i}.
\]

This map projects to $v_0$ in all trivial (with respect to the $G$-action) simplices except the $i$th, leaving a simplex of dimension $|p^{-1}(i)|-1$ with isotropy $K_i$, which is isovariantly homeomorphic to the corresponding face with isotropy $K_i$ in $\Delta_n(G; \mathbf{H})$. That is, $\phi$ sends
\[
\xymatrix{\Delta^{|p^{-1}(0)|-1} \times \cdots \times \Delta^{|p^{-1}(k)|-1} \times \{v_{K_i}\} \ar[r]^(.63){proj} & \Delta^{|p^{-1}(i)|-1} \times \{v_{K_i}\} \ar[r]^(.545)\cong &  \text{span}(w_{p^{-1}(i)}).}
\]

Here $w_{p^{-1}(i)}$ denotes the subset of vertices $\{ w_j: j \in p^{-1}(i) \}$ of $\Delta_n(G;\mathbf{H})$.

\medskip

Let $n_i=|p^{-1}(i)|-1$, and let $b^{n_i}: \partial \Delta^{n_i} \to \Delta^{n_i}$ denote the boundary inclusion. Note that $\prod_{i=0}^k \Delta^{n_i} \cong D^{n-k}$, and the boundary (whose inclusion is defined as a pushout product, that is, the map $b^{n_0} \Box \cdots \Box b^{n_k}: \partial (\prod \Delta^{n_i}) \to \prod \Delta^{n_i}$) is homeomorphic to $S^{n-k-1}$. Therefore the map $b^{n-k} \Box b^\mathbf{K}$ is isovariantly homeomorphic to the map $s_{n-k-1} \Box b^\mathbf{K} \in \mathcal{I}$. 

\medskip

Note that $\phi$ restricts to an isovariant map 
\[
\phi: \partial (\prod_{i=0}^k \Delta^{n_i}) \times \Delta^{\mathbf{K}} \coprod_{\partial (\prod_{i=0}^k\Delta^{n_i}) \times \partial \Delta^{\mathbf{K}}} (\prod_{i=0}^k\Delta^{n_i}) \times \partial \Delta^{\mathbf{K}} \to \partial \Delta_n(G; \mathbf{H}).
\]

  Careful consideration shows that $\Delta_n(G;H_0, \dots, H_n)$ is the pushout 
  \[
\xymatrix{
  \partial (\prod_{i=0}^k\Delta^{n_i}) \times \Delta^{\mathbf{K}} \coprod_{\partial (\prod_{i=0}^k\Delta^{n_i}) \times \partial \Delta^{\mathbf{K}}} (\prod_{i=0}^k\Delta^{n_i}) \times \partial \Delta^{\mathbf{K}} \ar[d]^{b^{n-k} \Box b^{\mathbf{K}}} \ar[r] & \partial {\Delta}_n(G;H_0, \dots, H_n) \ar[d] \\
  (\prod_{i=0}^k\Delta^{n_i}) \times \Delta^{\mathbf{K}} \ar[r]^{\phi} & \Delta_n(G;H_0, \dots, H_n)
}
  \]

  Since all smooth $G$-manifolds have an equivariant triangulation (\cite[3.8]{veronastrat}), the proof is complete.
\end{proof}

In section \ref{sec-isvt-fixed-pt}, we will need the fact that some subspaces of a $G$-manifold $M$ have inclusions which are in $\icell$. We record the result here. 

\begin{lemma} \label{subcomplex}
  The inclusion of an equivariant subsimplex into a linking simplex is in $\icell$.
\end{lemma}

\begin{proof}
Let $\mathbf{H}= H_0< \cdots < H_k$ and let $A$ be an equivariant subsimplex of the linking simplex $\Delta^\mathbf{H}$. That is, $A=\Delta^{\widehat{\mathbf{H}}}$, where $\widehat{\mathbf{H}}$ is a subchain of $\mathbf{H}$. Let $H_{i_1}, \dots, H_{i_m}$ denote the subgroups of $\mathbf{H}$ which are not in $\widehat{\mathbf{H}}$.

We will build $\Delta^\mathbf{H}$ from $A$ as a composition of pushouts along maps of $\mathcal{I}$. When $n=0$, the map $s_{n-1} \Box b^\mathbf{K}$ of $\mathcal{I}$ is the map $b^\mathbf{K}$ for any $\mathbf{K}$. Then if $A_0=A \coprod \coprod_{j=1}^m \Delta^{H_{i_j}}$ is the union of $A$ with the orbits of the missing subgroups $H_{i_j}$, the map $A \to A_0$ is in $\icell$ because it is a pushout along $\coprod b^{H_{i_j}}$. 
Now the length 1 subchains of $\mathbf{H}$ containing $H_{i_j}$ have both length 0 subchains represented in $A_0$; that is, we may glue in the equivariant 1-simplices $\Delta^{H_{i_j}<H'}$ along $\partial \Delta^{H_{i_j}<H'}$.
Let $A_1$ be the pushout of $A_0$ along the coproduct of the generating cofibrations $b^{H_{i_j}<H'}$ and $b^{H' < H_{i_j}}$ for $j=1,\dots, m$. By induction, one can continue in this way until the entire boundary of the linking simplex has been glued in to $A$ along generating cofibrations of the form $b^{\mathbf{K}}$. Finally, $\Delta^\mathbf{H}$ can be obtained by pushout along $b^{\mathbf{H}}$, so $A \to \Delta^{\mathbf{H}}$ is a composition of pushouts of maps in $\mathcal{I}$, so is in $\icell$. 
\end{proof}

\begin{cor} \label{Mk-to-M-is-cofib}
For all $k$, the inclusion $M_{k} \to M$ is in $\icell$.
\end{cor}

\begin{proof}
The equivariant triangulation on $M$ admits $M_{k}$ as an equivariant subtriangulation.
By Lemma \ref{subcomplex}, $M_k \to M$ is in $\icell$.
\end{proof}

\subsection{Manifolds satisfy isovariant lifting properties}
We will now show that smooth $G$-manifolds satisfy isovariant lifting properties with respect to maps in $\jcell$. Recall that the pushout-product map $s_{n-1} \Box i_0$ is homeomorphic to the map $D^{n} \times i_0$ in $\topp$, in the sense that $D^n \times i_0$ is $s_{n-1} \Box i_0$ composed with a homeomorphism. Then for a $G$-space $X$, there is an isovariant lift in the following diagram for any map $j:A \to B$ of $\jcell$ 
\[
\xymatrix{
A \ar[r] \ar[d]_j & X \\
B \ar@{-->}[ur]
}
\]

if and only if there is an isovariant lift in the following diagram for all $n$ and for all chains of subgroups $\mathbf{H}$:

\[
\xymatrix{
I \times D^n \times \partial \Delta^\mathbf{H} \cup_{\{0\}\times D^n \times \partial \Delta^\mathbf{H}} \{0\} \times D^n \times \Delta^\mathbf{H} \ar[rr] \ar[d]_{(i_0 \times D^n) \Box b^\mathbf{H}} && X \\
I \times D^n \times \Delta^\mathbf{H} \ar@{-->}[urr] }
\]

Using the adjunction of the pushout-product with the pullback-hom (Lemma \ref{lemma:popr-adj-pbhom} with $\mathcal{C}=\topp$ and $\sD=\mathcal{E}=\itop$), such an extension corresponds to a lift in the following diagram of spaces, or equivalently the condition that $\imap(b^\mathbf{H}, X)$ is a Serre fibration in $\topp$, since $s_{n-1}\Box i_0$ are the generating acyclic cofibrations of $\topp$. 
\[
\xymatrix{
  \{0\} \times D^n \ar[r] \ar[d] & \imap(\Delta^\mathbf{H}, X) \ar[d] \\
  I \times D^n \ar[r] \ar@{-->}[ur] & \imap(\partial \Delta^\mathbf{H},X)
}
\]

We will show that if $X$ is a smooth $G$-manifold, such a lift exists. 
We need to set up some notation for the proof. 
For $0<\epsilon<1$, let $\Delta^n_{\epsilon}$ be $\{(t_0, \dots, t_n): \sum t_i =1, 1-\epsilon \leq t_0 \leq 1\}$, as pictured in Figure \ref{epsimplicesA}. The corresponding equivariant simplex $\Delta^{\mathbf{H}}_\epsilon$ is the appropriate quotient of $G \times \Delta^n_\epsilon$. This truncated simplex contains the 0th vertex, which is fixed by $H_n$ in a fundamental domain of the equivariant simplex. 

\medskip

Let $\Delta^n_{=\epsilon}$ be the face of $\Delta^n_\epsilon$ which does not contain the 0 vertex and let $\Delta^n_{<\epsilon}$ be its complement in $\Delta^n_{\epsilon}$. We note that $\Delta^n_{=\epsilon} \cong \Delta^{n-1}$ and for the corresponding equivariant simplices, $\Delta^{H_0< \cdots<H_n}_{=\epsilon}$ is isovariantly homeomorphic to $\Delta^{H_0< \cdots<H_{n-1}}$. Finally, for $0<\delta<\epsilon<1$, let $\Delta_{[\delta,\epsilon]}$ denote $\Delta_{\epsilon}\setminus \Delta_{<\delta}$, (pictured in figure \ref{epsimplicesB}). 
Then the boundary is $\partial \Delta_{[\delta,\epsilon]}= (\partial \Delta_{\epsilon} \setminus \partial\Delta_{\delta}) \cup \Delta_{=\delta}$. Denote the inclusion of this boundary by $b$. For the corresponding equivariant simplices, isovariantly $\Delta_{[\delta,\epsilon]}\cong \Delta_{=\epsilon} \times I$ and the inclusion of the boundary is isovariantly homeomorphic to the pushout-product $b\cong (\partial I \to I) \Box (\partial\Delta_{=\epsilon} \to \Delta_{=\epsilon})$ or equivalently, $b \cong s_0 \Box b^{H_0< \cdots < H_{n-1}}$.
We denote by $\delta_0$ the map $\partial \Delta^{n}_\epsilon \setminus \mathring{\Delta}_{=\epsilon} \to \Delta_\epsilon^n$ which includes the boundary components containing the 0th vertex, pictured in figure \ref{epsimplicesC}.

\begin{figure} 
\begin{subfigure}[t]{0.4\textwidth}
\begin{tikzpicture}
  \filldraw[fill=black!20] (0,2)--(4,2)--(2,-1)--(0,2);
  \filldraw[thick, magenta, fill=magenta!30] (1,.5)--(3,.5)--(2,-1)--(1,.5);
  \node[below] at (2,-1) {$v_0$};
  \node[left, magenta] at (1, .5) {$\Delta_{=\epsilon}^n -$};
  \draw [decorate,decoration={brace,amplitude=10pt}, magenta] (3.1,.4) -- (3.1,-1) node [midway,xshift=20pt] {$\Delta_{<\epsilon}^n$};
\end{tikzpicture}
\caption{$\Delta_\epsilon^n \subset \Delta^n$ with bold boundary}
\label{epsimplicesA}
\end{subfigure}
\, \, \, \, 
\begin{subfigure}[t]{0.4\textwidth} 
\begin{tikzpicture}
  \filldraw[fill=black!20] (0,2)--(4,2)--(2,-1)--(0,2);
  \node[below] at (2,-1) {$v_0$};
  \filldraw[thick, blue, fill=blue!30] (.333,1.5)--(3.667,1.5)--(2.667,0)--(1.333,0)--(.333,1.5);
  \node[right, blue] at (4, 1.5) {$- \Delta_{=\epsilon}^n$};
  \node[right, blue] at (4, 0) {$- \Delta_{=\delta}^n$};
\end{tikzpicture}
\caption{$\Delta_{[\delta,\epsilon]}^n \subset \Delta^n$ with bold boundary}
\label{epsimplicesB}
\end{subfigure} 
\vspace{.2in}

\begin{subfigure}[t]{0.4\textwidth} 
\begin{tikzpicture}
  \draw[thick, magenta] (3,.5)--(2,-1)--(1,.5); 
  \draw[->] (3.5,0)--(4.5,0);
  \filldraw[thick, magenta, fill=magenta!30] (5,.5)--(7,.5)--(6,-1)--(5,.5);
   \node[above] at (4,0) {$\delta_0$};
  \node[below] at (2,-1) {$v_0$};
  \node[below] at (6,-1) {$v_0$};
\end{tikzpicture}
\caption{the map $\delta_0: \partial\Delta_\epsilon \setminus \mathring{\Delta}_{=\epsilon} \to \Delta_\epsilon$}
\label{epsimplicesC}
\end{subfigure}
\caption{}
\end{figure}

\medskip

We denote by $\horseshoe$ the domain of a map $b^\mathbf{H} \Box i_0 \in \mathcal{J}$. Let $\corner{\ep}{\ep'} \subset \horseshoe$ be the domain of the pushout-product of $i_0:\{0\} \to [0,\epsilon']$ with $\delta_0:\partial\Delta_\epsilon \setminus \mathring{\Delta}_{=\epsilon} \to \Delta_\epsilon$, pictured for $H_0<H_1<H_2$ in figure \ref{horseshoe} on a fundamental domain. 

\begin{figure}

\tdplotsetmaincoords{60}{120}

\begin{tikzpicture}[tdplot_main_coords]

\draw[dotted] (0,0,0)--(2,0,3)--(-2,0,3)--(0,0,0);
\draw[dotted] (0,4,0)--(2,4,3);
\draw[dotted] (0,0,0)--(0,4,0);
\draw[dotted] (0,4,0)--(-2,4,3);

\filldraw[blue, fill=blue!20] (0,0,0)--(1,0,1.5)--(-1,0,1.5)--(0,0,0);
\draw[orange, thick] (0,0,0)--(1,0,1.5);
\node at (0,0,0)(a1) {${\color{magenta}{\bullet}}$};
\node at (1,0,1.5)(v10) {${\color{orange}{\bullet}}$};

\filldraw[blue, fill=blue!20] (0,0,0)--(-1,0,1.5)--(-1,2,1.5)--(0,2,0);

\filldraw[orange, fill=orange!20] (0,0,0)--(1,0,1.5)--(1,2,1.5)--(0,2,0);
\draw[orange, thick] (0,2,0)--(1,2,1.5)--(1,0,1.5);

\node at (0,2,0)(v01) {${\color{magenta}{\bullet}}$};
\draw[magenta,thick] (0,0,0)--(0,2,0);
\node at (-1,0,1.5)(v20) {${\color{blue}{\bullet}}$};

\draw [decorate,decoration={brace,amplitude=10pt}, magenta] (0,2,-0.2) -- (0,0,-0.2) node [midway,yshift=-20pt] {$[0,\ep']$};
\node at (0,-2,.5) {$fd(\Delta^{\mathbf{H}}_\ep)$};

\draw[dotted] (2,0,3)--(2,4,3)--(-2,4,3)--(-2,0,3);

\end{tikzpicture}

\caption{$\corner{\ep}{\ep'} \subset fd(\Delta^{H_0<H_1<H_2})\times I$}
\label{horseshoe}
\end{figure}

\begin{thm}\label{thm-mflds-fibrant}
  For a finite group $G$, a smooth $G$-manifold $M$, and a map $j \in \jcell$, there is an isovariant lift in the following diagram:

\[
\xymatrix{A \ar[r] \ar[d]_j &M \\ B \ar@{-->}[ur]}
\]
\end{thm}

We note that having this lifting condition against maps of $\mathcal{J}$ implies that it holds for maps $j \in \jcell$.

The lifting condition above resembles the fibrancy condition for stratified simplicial sets in \cite{douteaustrat}. Proposition 4.12 of \cite{douteaustrat} shows that conically stratified spaces are fibrant, relying on Theorem A.6.4 of \cite{lurieHA}. Our proof below resembles these arguments, but is simplified by the fact that we only consider smooth manifolds rather than the more general conically stratified spaces.

\begin{proof}

We will show that $M$ has the extension property with respect to $i_0 \Box b^\mathbf{H}$ by induction on the length of the chain $\mathbf{H}=H_0< \cdots < H_n$. The same proof shows $M$ has the extension property with respect to $(i_0 \Box b^\mathbf{H}) \times D^n$, so we omit the disk factor throughout this proof.

\medskip

For $\mathbf{H}=H_0$, a length 0 chain, the lifting condition on $\imap(\Delta^\mathbf{H},M) \to \imap(\partial \Delta^\mathbf{H},M)$ reduces to fibrancy of the space $\imap(G/H_0,M) = M_{H_0}$. 

\medskip

Assume that the maps $i_0 \Box b^{H_0< \cdots <H_{n-1}}$ extend isovariantly against any $G$-manifold $M$ for any length $n-1$ chain of subgroups $H_0< \cdots <H_{n-1}$.

\medskip

We wish to show that the map $i_0 \Box b^{H_0< \cdots <H_{n}}$ extends against a $G$-manifold $M$. 
Denote the domain of $i_0 \Box b^{H_0< \cdots < H_{n}}$ by $\horseshoe$ and let $\phi:\horseshoe \to M$ be the map we wish to extend. By the equivariant tubular neighborhood theorem, there is a neighborhood of $\phi(0 \times \Delta^{H_{n}})$ in $M$ which is $G$-homeomorphic to $G \times_{H_{n}} V$ where $V$ is a $H_n$-representation. Let $\corner{\ep}{\ep'} \subset \horseshoe$ be the domain of the pushout-product of $i_0:\{0\} \to [0,\epsilon']$ with $\delta_0:\partial\Delta_\epsilon \setminus \mathring{\Delta}_{=\epsilon} \to \Delta_\epsilon$ (as pictured in Figure \ref{horseshoe}). By continuity, there exist $\epsilon$ and $\ep'$ such that $\phi(\corner{\ep}{\ep'}) \subset G \times_{H_n} V$.

\medskip

An extension of $\phi$ for one connected component of $\corner{\ep}{\ep'}$ mapping to one copy of $V$ will define the extension $\Phi:[0,\epsilon'] \times \Delta^{H_0< \cdots <H_{n}}_\epsilon \to G \times_{H_n} V$ for all components by equivariance. By compactness, this yields an extension of $\phi$ to $I \times \Delta_\epsilon \to M$; fibrancy of the space $M_{H_0}$ yields the extension to all of $I \times \Delta^{H_0< \cdots <H_n}$.

\medskip

Decompose the $H_n$-representation $V$ as the product $V^{H_n} \times W$ where $W=(V^{H_n})^\perp$ and $W^{H_n}=0$. The map $\phi$ can be written as the product $\phi=\phi^{fix}\times \psi$ with $\phi^{fix}:\corner{\ep}{\ep'} \to V^{H_n}$ and $\psi:\corner{\ep}{\ep'} \to W$. Isovariance of $\phi$ implies $\psi$ is isovariant and $\psi([0,\epsilon] \times \Delta^{H_n})=0\in W$.

\medskip

We will extend $\phi^{fix}$ and $\psi$ individually and define the desired extension $\Phi:[0,\epsilon'] \times \Delta^{H_0<\cdots <H_n}_\epsilon \to V$ by $\Phi=\Phi^{fix} \times \Psi$. If $\Psi$ is isovariant, then $\Phi$ is an isovariant extension of $\phi$.

\medskip

The extension $\Phi^{fix}$ of $\phi^{fix}$ is obtained as
\[
\xymatrix{
  fd(\corner{\ep}{\ep'}) \ar[rr]^{\phi^{fix}} \ar[d]_{i_0 \Box \delta_0} && V^{H_n} \\
  [0,\epsilon'] \times fd(\Delta^{H_0<\cdots <H_n}_\epsilon) \ar@{-->}[urr]_{\Phi^{fix}}
}
\]
The vertical map is the pushout-product of $i_0:\{0\} \to [0,\epsilon']$ with the map $\delta_0: \partial\Delta_\epsilon \setminus \mathring{\Delta}_{=\epsilon} \to \Delta_\epsilon$. Since $V^{H_n}$ is fibrant as a space and $i_0 \Box \delta_0$ is an acyclic cofibration of spaces, the extension exists. Once the extension is defined on the fundamental domains, it can be defined on the whole space by equivariance.

\medskip

It remains to extend $\psi$ isovariantly to $\Psi:[0,\epsilon'] \times \Delta^{H_0<\cdots<H_n}_\epsilon \to W$. 

\medskip

By continuity of $\phi$, we can choose real numbers $p_1> p_2> \cdots > p_k> \cdots$ such that $\psi(\corner{p_k}{\ep'}) \subset B_{1/k}(W)$, 
that is, $\psi$ sends the truncated pushout-product domain to the ball of radius $1/k$ centered at 0 in $W$. 

\medskip

Since $B_{1/k}(W)$ is an $H_n$-manifold, we may apply the inductive hypothesis with $\mathbf{H}=H_0< \cdots <H_{n-1}$ to isovariantly extend $\psi$ to each level $[0,\epsilon'] \times \Delta_{=p_k}$.
\[
\xymatrix{
  0 \times \Delta_{=p_k}^{\mathbf{H}} \ar[rr]^{\psi} \ar[d]^{i_0}  & & B_{1/k}(W)_{H_0}\\
  [0,\epsilon'] \times \Delta_{=p_k}^{\mathbf{H}} \ar@{-->}[urr]_{\psi'_k}
}
\]

A similar diagram extends $\psi: 0 \times \Delta^\mathbf{H}_{=\epsilon} \to W_{H_0}$ to $\psi_0':[0,\epsilon'] \times \Delta^\mathbf{H}_{=\epsilon}\to W_{H_0}$. The result of the steps so far is pictured in Figure \ref{fibproofA}.

\medskip

Extension to each sector $\psi_k:[0, \epsilon'] \times \Delta_{[p_{k+1},p_{k}]} \to B_{1/k}(W)$ is also given by the inductive hypothesis, using that isovariantly $\Delta_{[p_{k+1},p_{k}]}^{H_0< \cdots< H_n} \cong \Delta^{H_0< \cdots < H_{n-1}} \times I$ and the boundary inclusion map $b$ is isovariantly homeomorphic to $s_0 \Box b^{H_0< \cdots< H_{n-1}}$.
\[
\xymatrix{
  0 \times \Delta_{[p_{k+1},p_{k}]} \coprod_{0 \times \partial \Delta_{[p_{k+1},p_{k}]}} [0,\epsilon'] \times \partial \Delta_{[p_{k+1},p_{k}]} \ar[rr]^(.7){\psi \coprod \psi'_k \coprod \psi'_{k+1}} \ar[d]^{i_0 \Box s_0 \Box b^{\mathbf{H}}}  & & B_{1/k}(W)\\
  [0, \epsilon'] \times \Delta_{[p_{k+1},p_{k}]} \ar@{-->}[urr]_{\psi_k}
}
\]

One of these extensions is pictured in Figure \ref{fibproofB}. 
Similarly, one obtains an isovariant extension $\psi_0:[0,\epsilon'] \times \Delta_{[p_1,\epsilon]} \to W$. 

\medskip

The union of the extensions $\psi_k$ yields an isovariant map
\[
\Psi:[0,\epsilon'] \times \Delta_\epsilon^{H_0< \cdots < H_{n}} \to W
\]
with $\Psi([0,\epsilon'] \times \Delta_{p_k}^{H_0<\cdots <H_n}) \subset B_{1/k}(W)$. Since $\psi([0,\epsilon'] \times \Delta^{H_n})=0$, the map $\Psi$ is continuous on its domain. 
\end{proof}

\begin{figure}
\begin{subfigure}[t]{0.4\textwidth}
\begin{tikzpicture}
    \draw[thick, magenta] (0,0)-- (2,0);
    \draw [decorate,decoration={brace,amplitude=10pt}, magenta] (2,-.2) -- (0,-.2) node [midway,yshift=-20pt] {$[0,\ep']$};
    \draw [cyan] (0,1.7)--(2,1.7);
    \draw [cyan] (0,1.5)--(2,1.5);
    \node [left] at (0,1.5) {$p_k$};
    \draw [cyan] (0,1.2)--(2,1.2);
    \node [left] at (0,1.2) {$p_{k+1}$};
    \draw [cyan] (0,.7)--(2,.7);
    \draw [cyan] (0,.5)--(2,.5);
    \draw [cyan] (0,.2)--(2,.2);
    \draw [thick, blue] (0,0)--(0,2);
    \node at (0,0) {${\color{magenta}{\bullet}}$};
    \node at (0,1.5) {$\bullet$};
    \node at (0,1.2) {$\bullet$};
    \draw [decorate, decoration={brace,amplitude=10pt}] (-.7,0)--(-.7,2) node [midway, xshift=-20pt] {$\Delta_\epsilon$};
\end{tikzpicture}
\caption{Domains of extensions $\psi_k'$ shown horizontally}
\label{fibproofA}
\end{subfigure}
\, \, \, \, 
\begin{subfigure}[t]{0.4\textwidth} 
\begin{tikzpicture}
    \draw[thick, magenta] (0,0)-- (2,0);
    \draw [decorate,decoration={brace,amplitude=10pt}, magenta] (2,-.2) -- (0,-.2) node [midway,yshift=-20pt] {$[0,\ep']$};
    \draw [cyan] (0,1.7)--(2,1.7);
    \draw [cyan] (0,1.5)--(2,1.5);
    \node [left] at (0,1.5) {$p_k$};
    \draw [cyan] (0,1.2)--(2,1.2);
    \node [left] at (0,1.2) {$p_{k+1}$};
    \draw [cyan] (0,.7)--(2,.7);
    \draw [cyan] (0,.5)--(2,.5);
    \draw [cyan] (0,.2)--(2,.2);
    \filldraw [cyan, fill=cyan!45] (2,1.2)--(0,1.2)--(0,1.5)--(2,1.5);
    \draw [thick, blue] (0,0)--(0,2);
    \node at (0,0) {${\color{magenta}{\bullet}}$};
    \node at (0,1.5) {$\bullet$};
    \node at (0,1.2) {$\bullet$};
    \draw [decorate, decoration={brace,amplitude=10pt}] (-.7,0)--(-.7,2) node [midway, xshift=-20pt] {$\Delta_\epsilon$};
\end{tikzpicture}
\caption{Domain of extension $\psi_k$ shaded in}
\label{fibproofB}
\end{subfigure} 
\caption{}
\end{figure}

\bigskip

\subsection{An isovariant Whitehead's theorem}

Using the results proven in the previous subsections, we can prove that isovariant weak equivalences between smooth $G$-manifolds are isovariant homotopy equivalences.

\begin{thm} \label{thm-whitehead}
  (Isovariant Whitehead's theorem) Let $G$ be a finite group, let $Y$ and $Z$ be smooth $G$-manifolds, and let $f:Y \to Z$ be an isovariant map. Then $f$ is an isovariant homotopy equivalence if and only if $f$ induces weak equivalences $\imap(\Delta^\mathbf{H},f)$ for all chains of subgroups $\mathbf{H}=H_0< \cdots < H_n$ of $G$.
\end{thm}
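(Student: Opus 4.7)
The plan is to recognize this as a direct application of the model-categorical Whitehead theorem, using the structural results proved just before. By Theorem \ref{thm-mflds-cofibrant}, the smooth $G$-manifolds $X$ and $Y$ are cofibrant in the geometric model structure, and by Theorem \ref{thm-mflds-fibrant}, they are fibrant. Moreover, by the definition of the class $\cW$ in Theorem \ref{thm-model-str}, the condition that $\imap(\Delta^\mathbf{H},f)$ be a weak equivalence for every chain $\mathbf{H}$ is exactly the condition that $f$ be a weak equivalence. So the theorem reduces to the assertion that, between fibrant-cofibrant objects, isovariant homotopy equivalences coincide with weak equivalences.

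The forward direction is purely functorial and does not use the model structure. If $g$ is an isovariant homotopy inverse to $f$ and $H:X \times I \to X$ is an isovariant homotopy from $g\circ f$ to $\mathrm{id}_X$ (with $I=[0,1]$ carrying the trivial $G$-action), then applying $\imap(\Delta^\mathbf{H},-)$ and using the natural isomorphism $\imap(\Delta^\mathbf{H}, X \times I) \cong \imap(\Delta^\mathbf{H}, X) \times I$ produces a genuine homotopy of spaces between $\imap(\Delta^\mathbf{H},g)\circ\imap(\Delta^\mathbf{H},f)$ and the identity, with the symmetric statement giving the other side.

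For the converse, I would invoke the general fact that a weak equivalence between fibrant-cofibrant objects in any model category is a homotopy equivalence in the model-categorical sense, i.e.\ admits a two-sided inverse up to left/right homotopies with respect to any choice of cylinder and path objects (\cite[II.1.10]{goerss-jardine}). To identify the resulting homotopy equivalence with an isovariant homotopy equivalence in the sense of the theorem statement, I would then exhibit $X \times I$ (with trivial $G$-action on $I$) as a valid cylinder object in the geometric model structure. The inclusion $X\amalg X \hookrightarrow X\times I$ is a cofibration because the geometric model structure is tensored over $\topp$ via cartesian product (this is built into the definition of $\mathcal{I}$ through pushout-products with maps from $\topp$), so pushout-producting the cofibration $\partial I \to I$ of $\topp$ with the cofibration $\emptyset \to X$ of $\itop$ yields a cofibration $X\times \partial I \to X\times I$. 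The projection $X\times I \to X$ is a weak equivalence because $\imap(\Delta^\mathbf{H},X\times I)\cong \imap(\Delta^\mathbf{H},X)\times I$ deformation retracts onto $\imap(\Delta^\mathbf{H},X)$.

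The main obstacle is not any deep content, but rather the bookkeeping of verifying that the $\topp$-tensoring interacts correctly with the geometric model structure to make $X\times I$ a genuine cylinder object; once that is in hand, Whitehead's theorem follows formally and abstract (model-categorical) homotopies descend to the ordinary isovariant homotopies appearing in the statement.
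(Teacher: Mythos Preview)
Your approach is exactly the paper's: the paper simply observes that smooth $G$-manifolds are fibrant and cofibrant in the geometric model structure and then cites \cite[II.1.10]{goerss-jardine} with no further argument. Your write-up supplies the extra bookkeeping (cylinder object, forward direction) that the paper leaves implicit, and the verification that $X\times I$ is a cylinder object is correct once one notes that $s_m \Box (s_{n-1}\Box b^{\mathbf{H}}) \cong s_{m+n}\Box b^{\mathbf{H}}$, so the $\topp$-tensoring satisfies the pushout-product axiom on generating cofibrations.

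One small slip: the claimed natural isomorphism $\imap(\Delta^{\mathbf{H}}, X\times I)\cong \imap(\Delta^{\mathbf{H}},X)\times I$ is not an isomorphism, since the $I$-component of an isovariant map $\Delta^{\mathbf{H}}\to X\times I$ is an arbitrary $G$-invariant map $\Delta^{\mathbf{H}}\to I$, i.e.\ a map $\Delta^n\to I$, not a single point of $I$. What you actually need (and what suffices) is the natural inclusion $\imap(\Delta^{\mathbf{H}},X)\times I\hookrightarrow \imap(\Delta^{\mathbf{H}},X\times I)$ sending $(\phi,t)$ to $x\mapsto(\phi(x),t)$; composing with $\imap(\Delta^{\mathbf{H}},H)$ then transports the isovariant homotopy to an ordinary homotopy of spaces. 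This does not affect your conclusion.
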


\begin{proof} Recall from Remark \ref{rem-homotopy-equiv} that an isovariant homotopy equivalence between $G$-spaces is an isovariant weak equivalence. 
For $G$-spaces $X$ and $Y$, let $\iclass{X}{Y}$ denote the set of isovariant homotopy classes of isovariant maps between them. Suppose that $Y$, $Z$ are smooth $G$-manifolds, and $f: Y \to Z$ is an isovariant weak equivalence. We will prove that if $M$ is a smooth $G$-manifold, then $f$ induces a bijection
$$f_*: \iclass{M}{Y} \to \iclass{M}{Z}.$$
When $M=Z$, the preimage of $id_Z$ gives an isovariant homotopy inverse for $f$.

\medskip

In order to do this, we will first replace $f: Y \to Z$ with an isovariant map $\hat{f}: \hat{Y} \to Z \in \jinj$, where $\hat{Y}$ is isovariantly homotopy equivalent to $Y$. Define 
$$\hat{Y} = \{ (y , \gamma) \in Y \times \map(I,Z) : f(y) = \gamma(0), G_{\gamma(t)} = G_{\gamma(0)} \forall t \in I \}$$
with diagonal $G$-action and
$$\hat{f}(y, \gamma) = \gamma(1).$$
This is very similar to the process of replacing a map of topological spaces by a fibration, except the paths are required to stay in the same isotropy subspace of $Z$ in order for the evaluation at 1 to be isovariant. Denote by $\ipath Z$ the paths in $Z$ that remain in the same isotropy subspace. That is, 
$$\ipath Z = \{ \gamma: I \to Z: G_{\gamma(t)} = G_{\gamma(0)} \forall t \}.$$
Then $\hat{Y} = \{ (y, \gamma) \in Y \times \ipath Z : f(y) = \gamma(0) \}$. We will prove a series of claims that will imply the theorem.

\medskip

\textbf{Claim 1: the space $\hat{Y}$ is isovariantly homotopy equivalent to $Y$.} Define $pr: \hat{Y} \to Y$ by projection onto the first component, and $c: Y \to \hat{Y}$ as $c(y) = (y, const_{f(y)})$. These maps are isovariant due to the requirement that paths in $\hat{Y}$ remain in the same isotropy subspace. In addition, $pr \circ c = id_Y$, so it remains to show that $c \circ pr$ is isovariantly homotopic to $id_{\hat{Y}}$.  For $(y, \gamma) \in \hat{Y}$, $c \circ pr (y, \gamma) = (y, const_{f(y)})$. For $s \in [0,1]$, define 
$H((y, \gamma), s) = (y, \gamma_s)$, where $\gamma_s(t) = \gamma(st)$. Then $H$ is an isovariant homotopy from $c \circ pr$ to $id_{\hat{Y}}$, as required.

\medskip

\textbf{Claim 2: the map $ev_0 \times ev_1 : \ipath Z \to Z \times Z$ is in $\jinj$.} In the notation of Lemma \ref{lemma:popr-adj-pbhom}, take $\mathcal{C} = \mathcal{E} = \itop$, and $\sD = \topp$. Here we use the tensoring and cotensoring of $\itop$ over $\topp$. Note that $ev_0 \times ev_1 = \Hom_\Box (i_{0,1}, t_Z)$, where $i_{0,1}:\{0,1\} \to [0,1]$ denotes the inclusion of the endpoints, and $t_Z$ denotes the map from $Z$ to the terminal object in $\itop$.  By the adjunction between pushout-product and pullback-hom, a lift in the diagram
$$\xymatrix{
A \ar[r] \ar[d]_-{  b^\mathbf{H} \Box s_{n-1} \Box i_0} & \ipath Z \ar[d]^-{ev_0 \times ev_1}\\
B \ar[r] \ar@{-->}[ur] & Z \times Z
}$$ 
is equivalent to a lift in the diagram

\[
\xymatrix{
A' \ar[r] \ar[d]_-{  b^\mathbf{H} \Box s_{n-1} \Box i_0 \Box i_{0,1} } & Z \\
B' \ar@{-->}[ur]
}
\]

Here $A$ and $B$ denote the source and target of $ b^\mathbf{H} \Box s_{n-1} \Box i_0$, and $A'$ and $B'$ denote the source and target of $b^\mathbf{H} \Box s_{n-1} \Box i_0 \Box i_{0,1}$. Since $i_{0,1} \cong s_0$, we have $ s_{n-1} \Box i_{0,1} \cong s_n$, so $b^\mathbf{H} \Box s_{n-1} \Box i_0 \Box i_{0,1} \in \jcell$. By Theorem \ref{thm-mflds-fibrant}, a lift exists in the second diagram, and therefore in the first. Therefore $ev_0 \times ev_1 \in \jinj$.

\medskip

\textbf{Claim 3: the map $\hat{f}$ is in $\jinj$.} Let $A$ and $B$ denote the source and target of $ b^\mathbf{H} \Box s_{n-1} \Box i_0 \in \mathcal{J}$, respectively. Any isovariant map $A \to \hat{Y}$ can be written as $\psi \times \phi$, where $\psi: A \to Y$ and $\phi: A \to \ipath Z$ are isovariant maps with $ev_0 \circ \phi(a) = f \circ \psi(a)$ for all $a \in A$. By Theorem \ref{thm-mflds-fibrant}, there is a lift in the diagram 

\[
\xymatrix{
A \ar[r]^-{\psi} \ar[d]_-{ b^\mathbf{H} \Box s_{n-1} \Box i_0} & Y \\
B \ar@{-->}[ur]_-{\tilde{\psi}}
}
\]
Denote this lift $\tilde{\psi}: B \to Y$. A commutative diagram

$$\xymatrix{
A \ar[r]^-{\psi \times \phi} \ar[d]_-{ b^\mathbf{H} \Box s_{n-1} \Box i_0} & \hat{Y} \ar[d]^-{\hat{f}} \\
B \ar[r]^-{\Phi} & Z
}$$

gives a commutative diagram

$$\xymatrix{
A \ar[r]^-\phi \ar[d]_-{ b^\mathbf{H} \Box s_{n-1} \Box i_0} & \ipath Z \ar[d]^-{ev_1} \\
B \ar[r]^-{\Phi} & Z
}$$

and along with the previously obtained lift $\tilde{\psi}: B \to Y$, we obtain a commutative diagram

$$\xymatrix{
A \ar[r]^-\phi \ar[d]_-{ b^\mathbf{H} \Box s_{n-1} \Box i_0} & \ipath Z \ar[d]^-{ev_0 \times ev_1} \\
B \ar[r]^-{(f \circ \tilde{\psi}) \times \Phi} & Z \times Z
}$$

By Claim 2, there is a lift $\tilde{\Phi}: B \to \ipath Z$ in the diagram above. Defining $F(x) = (\tilde{\psi}(x), \tilde{\Phi}(x))$ for all $x \in B$ then gives a lift $F : B \to \hat{Y}$ in the diagram

$$\xymatrix{
A \ar[r]^-{\psi \times \phi} \ar[d]_-{ b^\mathbf{H} \Box s_{n-1} \Box i_0} & \hat{Y} \ar[d]^-{\hat{f}} \\
B \ar[r]^-{\Phi} & Z
}$$

as required.

\medskip

We will finish the proof of the theorem by showing that for $M$ a smooth $G$-manifold, the isovariant weak equivalence $f$ induces a bijection
$$f_*: \iclass{M}{Y} \to \iclass{M}{Z}.$$

\textbf{Claim 4: $f_*$ is surjective.} Suppose we have an isovariant map $g: M \to Z$.  Note that $f = \hat{f} \circ c$ is an isovariant weak equivalence, and $c$ is an isovariant homotopy equivalence, so by the 2 out of 3 property for isovariant weak equivalences, $\hat{f} \in \mathcal{W}$. By Claim 3, $\hat{f} \in \jinj$. Now, by Lemma \ref{lem-I-inj}, $\mathcal{W} \cap \jinj \subseteq \iinj$, and therefore $\hat{f} \in \iinj$. By Theorem \ref{thm-mflds-cofibrant},  $\emptyset \to M \in \icell$, so there is a lift $\tilde{g}: M \to \hat{Y}$ in the diagram

$$\xymatrix{
\empty & \hat{Y} \ar[d]^{\hat{f}} \\
M \ar[r]^-g & Z
}$$

Composing with $pr: \hat{Y} \to Y$, we obtain by Claim 1 a preimage $pr \circ \tilde{g} : M \to Y$ of $g$.

\medskip

\textbf{Claim 5: $f_*$ is injective.} Suppose that $h_0, h_1: M \to Y$ are isovariant maps satisfying $f \circ h_0 = g_0, f \circ h_1 = g_1$, where $g_0$ and $g_1$ are isovariantly homotopic. Composing with $c: Y \to \hat{Y}$, we obtain a commutative diagram

$$\xymatrix{
M \times \{0,1\} \ar[rr]^-{(c \circ h_0) \amalg (c \circ h_1)} \ar[d] && \hat{Y} \ar[d]^{\hat{f}} \\
M \times I \ar[rr]^-{H'} && Z
}$$
where $H'$ is an isovariant homotopy from $g_0$ to $g_1$, and the left hand vertical map is the inclusion. Since one can find an equivariant triangulation of $M \times I$ with an equivariant subtriangulation of $M \times \{0,1 \}$, by Lemma \ref{subcomplex}, the left vertical map is in $\icell$. Recall that $\hat{f} \in \iinj$. Since $M \times \{0 ,1 \} \to M \in \icell$, a lift $H: M \times I \to \hat{Y}$ exists in the above diagram. Composing with $pr: \hat{Y} \to Y$, we obtain an isovariant homotopy from $h_0$ to $h_1$, as required.

\medskip

This concludes the proof of the theorem.
\end{proof}

In \cite[4.10]{dulaschultz}, the authors prove an isovariant Whitehead theorem for compact smooth $G$-manifolds with treelike isotropy structure.  We show that the isovariant Whitehead theorem above agrees with the theorem of Dula--Schultz in the cases where the latter applies. That is, we will show

\begin{prop}\label{prop-equiv-DS}
Let $X$ and $Y$ be compact smooth $G$-manifolds with treelike isotropy structure, and $f: X \to Y$ an isovariant map which satisfies the hypotheses of Theorem 4.10 of \cite{dulaschultz}. Then $f$ is an isovariant weak equivalence.

\end{prop}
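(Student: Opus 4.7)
The plan is to show that the conclusion of Theorem 4.10 of \cite{dulaschultz} gives something strictly stronger than what the geometric model structure requires, so that the proposition follows by a formal argument about the behavior of $\imap(\Delta^\mathbf{H},-)$ under isovariant homotopy.

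Concretely, I would first recall that Theorem 4.10 of \cite{dulaschultz}, applied to $f$, produces an isovariant homotopy inverse $g: Y \to X$ together with isovariant homotopies $H: gf \simeq \id_X$ and $K: fg \simeq \id_Y$ (through isovariant maps $X \times I \to X$ and $Y \times I \to Y$). In other words, the hypotheses of Dula--Schultz promote $f$ from a map satisfying their link/fixed-set hypotheses to an honest isovariant homotopy equivalence in the category $\itop$.

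Next I would promote this to a weak equivalence in the geometric model structure by checking that for every chain $\mathbf{H}=H_0<\cdots<H_n$, the map $\imap(\Delta^\mathbf{H},f)$ is a homotopy equivalence of spaces (hence a weak equivalence). This is a purely formal consequence of the enrichment: the functor $\imap(\Delta^\mathbf{H},-):\itop \to \topp$ is enriched, so the homotopies $H$ and $K$ push forward to homotopies
\[
\imap(\Delta^\mathbf{H},H):\imap(\Delta^\mathbf{H},g)\circ \imap(\Delta^\mathbf{H},f)\simeq \id,
\]
\[
\imap(\Delta^\mathbf{H},K):\imap(\Delta^\mathbf{H},f)\circ \imap(\Delta^\mathbf{H},g)\simeq \id,
\]
exhibiting $\imap(\Delta^\mathbf{H},f)$ as a homotopy equivalence with explicit inverse $\imap(\Delta^\mathbf{H},g)$. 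By definition of $\cW$ in Theorem \ref{thm-model-str}, this is exactly the condition for $f$ to be a weak equivalence in the geometric model structure.

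There is essentially no obstacle; the only substantive input is citing Dula--Schultz for the existence of the isovariant homotopy inverse, and the remainder is a formal unwinding of the enrichment of $\itop$ in $\topp$. If one wanted to be slightly more explicit, the isovariant homotopies can be promoted via the pushout-product/pullback-hom adjunction with $i_0:\{0\}\to[0,1]$ recorded in Section 2, but this is not necessary for the conclusion.
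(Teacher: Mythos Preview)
Your argument is logically valid: once Dula--Schultz's Theorem 4.10 hands you an isovariant homotopy inverse, the enrichment of $\itop$ in $\topp$ immediately makes each $\imap(\Delta^\mathbf{H},f)$ a homotopy equivalence, so $f\in\cW$. However, the paper deliberately takes a different route. Rather than invoking the \emph{conclusion} of Dula--Schultz, the paper works directly from their \emph{hypotheses} (equivalences on $X^H$, on $X_H$, and on the one-dimensional links $X_{H<H'}$) and proves by induction on the length of $\mathbf{H}$ that $\imap(\Delta^\mathbf{H},f)$ is a weak equivalence, by expressing $\imap(\Delta^\mathbf{H},X)$ as a homotopy pullback of $\imap(\Delta^{\mathbf{H}'},X)$ and $X_{H_0<H_0'}$ over $X^{H_0'}$. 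The payoff is twofold. First, the stated purpose of the proposition is to show that Theorem~\ref{thm-whitehead} \emph{extends and strengthens} Dula--Schultz; your argument, by citing their theorem, only shows consistency and leaves the ``extends'' claim resting on their proof rather than on the paper's framework. Second, the paper's inductive pullback argument is what motivates and essentially proves the subsequent proposition: that for maps between $G$-manifolds it suffices to check weak equivalence on chains of length $0$ and $1$. Your shortcut is cleaner but buys neither of these.
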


A space has \emph{treelike isotropy structure} if all its isotropy subgroups are normal in $G$ and for each isotropy subgroup $H$, the set of isotropy subgroups $K$ such that $K \subset H$ is linearly ordered by inclusion. For more details, see section 3 of \cite{dulaschultz}.

\medskip

In Theorem 4.10 of \cite{dulaschultz}, the hypotheses on the isovariant map $f$ are:
\begin{itemize}
\item For every isotropy subgroup $H \leq G$, $f^H: X^H \to Y^H$ is a homotopy equivalence
\item For every isotropy subgroup $H \leq G$, $f_H: X_H \to Y_H$ is a homotopy equivalence, and
\item For every isotropy subgroup $H \leq G$, $f$ induces a homotopy equivalence $\partial N^H(X) \to \partial N^H (Y)$, where $N^H(X)$ is a mapping cylinder neighborhood of $Sing(X^H)\subset X^H$ and $Sing(X^H)$ is the set of points in $X^H$ whose isotropy groups are strictly larger than $H$.
\end{itemize}

Let $H'$ be a minimal subgroup of $G$ strictly containing $H$ which appears as an isotropy subgroup in $X$. We denote by $\tub_{X^H}(X^{H'})$ a tubular neighborhood of $X^{H'}$ in $X^H$, and $\partial \tub_{X^H}(X^{H'})$ its boundary.  If $X$ has treelike isotropy structure, the space $\partial N^H(X)$ is a disjoint union of spaces of the form $\partial \tub_{X^H}(X^{H'})$ for such $H'$. This is because, if $H'$ and $H''$  minimally strictly contain $H$ and all three appear as isotropy subgroups, then $X^{H'} \cap X^{H''} = \emptyset$, and we can choose tubular neighborhoods which are also disjoint. We will use the following convenient model for $\partial \tub_{X^H}(X^{H'})$:
$$\partial \tub_{X^H}(X^{H'}) \simeq X_{H < H'} = \{ \gamma: [0,1] \to X^H: \gamma(0) \in X^{H'}, \gamma(t) \in X_H \forall t>0 \}$$

Thus we can rephrase the third Dula--Schultz assumption on $f$ as follows:

\begin{itemize}
\item For every consecutive pair of isotropy subgroups $H < H'$, $f$ induces an equivalence $X_{H < H'} \to Y_{H < H'}$.
\end{itemize}

We will now prove Proposition \ref{prop-equiv-DS}.
 \begin{proof}
Let $X$ and $Y$ be compact smooth $G$-manifolds with treelike isotropy structure, and $f: X \to Y$ an isovariant map satisfying the assumptions above. We will prove that $f$ induces weak equivalences $\imap(\Delta^\mathbf{H},f)$ for all chains of subgroups $\mathbf{H}=H_0< \cdots < H_n$ of $G$ by induction on the length of $\mathbf{H}$. For $n=0$, if $H_0 \leq G$ is an isotropy subgroup, then $\imap(\Delta^{H_0},f)=f_{H_0}: X_{H_0} \to Y_{H_0}$ is an equivalence, by assumption.

\medskip

Now take a chain of subgroups $\mathbf{H}=H_0< \cdots < H_n$, and let $H_0'$ be the smallest isotropy subgroup appearing in $X$ which strictly contains $H_0$ and is contained in $H_1$. Let $\mathbf{H}' = H_1< \cdots < H_n$. We will show that $\imap(\Delta^\mathbf{H},X)$ is naturally weakly equivalent to the homotopy pullback in the diagram

$$\xymatrix{
P \ar[r] \ar[d] & X_{H_0 < H_0 '} \ar[d]^-{ev_0} \\
\imap(\Delta^{\mathbf{H}'},X) \ar[r] & X^{H_0 '} 
}$$

where the bottom map evaluates on any point in the simplex $\Delta^{\mathbf{H}'}$; these maps are all homotopic. The homotopy pullback of the diagram above is equivalent to the homotopy pullback of

$$\xymatrix{
P \ar[r] \ar[d] & \map(\Delta^{n-1}, X_{H_0 < H_0 '}) \ar[d]^-{(ev_0)_\ast} \\
\imap(\Delta^{\mathbf{H}'},X) \ar[r] & \map(\Delta^{n-1}, X^{H_0 '})
}$$

where the bottom map is the inclusion of isovariant maps into all maps. We claim that $ev_0$ is a fibration, and therefore so is $(ev_0)_\ast$. Assuming this, the homotopy pullback is equivalent to the pullback. The pullback in this diagram is the space of maps $[0,1] \times \Delta^{\mathbf{H}'} \to X$ which send $(0, (t_i)) \in [0,1] \times \Delta^{\mathbf{H}'}$ to points with isotropy group the corresponding subgroup in $\mathbf{H}'$, and send $(s, (t_i))$ for $s > 0$ to points with isotropy group $H_0$. Thus the pullback is homeomorphic to the space of isovariant maps, $\imap(\Delta^\mathbf{H},X)$.

\medskip

By the induction hypothesis, $f$ induces weak equivalences $X_{H_0 < H_0 '} \to Y_{H_0 < H_0 '}$, $X^{H_0 '} \to Y^{H_0 '}$, and $\imap(\Delta^{\mathbf{H}'},X) \to \imap(\Delta^{\mathbf{H}'},Y)$. Thus it induces a weak equivalence on the homotopy pullbacks, $\imap(\Delta^\mathbf{H},X) \to \imap(\Delta^\mathbf{H},Y)$. So $\imap(\Delta^\mathbf{H},f)$ is a weak equivalence, as required.

\medskip

It remains to justify the fact that $ev_0: X_{H_0 < H_0 '} \to X^{H_0'}$ is a fibration in $\topp$. In other words, we want to show that there is a lift in every diagram of the form
$$\xymatrix{
D^m \times 0 \ar[r] \ar[d]^-{D^m \times i_0} & X_{H_0 < H_0 '} \ar[d]^-{ev_0} \\
D^m \times I \ar[r] & X^{H_0'}
}$$

Adjointing via the path space description of $X_{H_0 < H_0 '}$, we would like to show that for every map
$$\phi: D^m \times 0 \times I \bigcup_{D^m \times 0 \times 0} D^m \times I \times 0 \to X^{H_0}$$
such that $\phi(v,s,0) \in X^{H_0 '}$ for all $s \in [0,1]$ and $\phi(v,0,t) \in X_{H_0}$ for all $t > 0$, there is an extension
$$\Phi: D^m \times I \times I \to X^{H_0}$$
such that $\Phi(v,s,t) \in X_{H_0}$ for all $t>0$. 

Restricting the $G$-action on $X_{H_0<H_0'}$ to an $H_0'$-action does not change the space because the isotropy of $X$ is treelike. As an $H_0'$-space, $X_{H_0<H_0'}$ is equivalent to the 1-dimensional link $\imap(\Delta^{H_0<H_0'},X)$. Thus the fact that $ev_0 \in \jinj$ follows from the proof of Theorem \ref{thm-mflds-fibrant}.
\end{proof}

This suggests that for isovariant maps between manifolds, it suffices to check weak equivalence on isotropy subspaces and on one-dimensional links. The following result verifies this.

\begin{prop}
Let $f: M \to N$ be an isovariant map between $G$-manifolds. If $\imap(\Delta^H,f)$ and $\imap( \Delta^{H_0 < H_1}, f)$ are weak equivalences for all $H, H_0, H_1 \leq G$, then $f$ is an isovariant weak equivalence.
\end{prop}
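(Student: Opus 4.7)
The plan is to induct on the length $n$ of the chain $\mathbf{H} = H_0 < H_1 < \cdots < H_n$. The cases $n = 0$ and $n = 1$ are immediate from the hypothesis. For $n \geq 2$, I would adapt the homotopy pullback argument from the proof of Proposition \ref{prop-equiv-DS}.

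Setting $\mathbf{H}' = H_1 < \cdots < H_n$, the key step is to establish a weak equivalence
$$\imap(\Delta^\mathbf{H}, M) \simeq \imap(\Delta^{\mathbf{H}'}, M) \times^h_{M^{H_1}} \imap(\Delta^{H_0 < H_1}, M),$$
with evaluation maps defined as in Proposition \ref{prop-equiv-DS}, using $H_1$ in place of $H_0'$. The prism-style argument there transfers to our non-treelike setting because every stratum of $\Delta^{\mathbf{H}'}$ has isotropy containing $H_1$, so isovariant maps $\Delta^{\mathbf{H}'} \to M$ land in $M^{H_1}$; and the relevant evaluation is a fibration by the fibrancy of manifolds (Theorem \ref{thm-mflds-fibrant}). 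By the inductive hypothesis, $\imap(\Delta^{\mathbf{H}'}, f)$ is a weak equivalence, and by the stated hypothesis, $\imap(\Delta^{H_0 < H_1}, f)$ is a weak equivalence. What remains is to verify that $f^{H_1}: M^{H_1} \to N^{H_1}$ is a weak equivalence.

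To verify this auxiliary fact---which is not among the stated hypotheses---I would run a secondary induction on the subconjugacy order of isotropy subgroups in $M$. For a maximal isotropy subgroup $K$, $M^K = M_K$ and $f^K = f_K$ is a weak equivalence by hypothesis. For a general isotropy subgroup $H$, I would decompose $M^H$ as a homotopy pushout of $M_H$ and a tubular neighborhood of $\bigcup_{K > H \text{ isotropy}} M^K$ in $M^H$, glued along the boundary of the neighborhood; up to weak equivalence, this boundary is a disjoint union of 1-dimensional links $\imap(\Delta^{H<K}, M)$. A Mayer--Vietoris argument then combines $f_H$ being a weak equivalence (hypothesis), $f$ being a weak equivalence on each 1-dim link (hypothesis), and $f^K$ being a weak equivalence for $K > H$ (secondary induction) to conclude that $f^H$ is a weak equivalence.

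The main obstacle lies in this secondary induction, specifically in setting up the Mayer--Vietoris decomposition of $M^H$ when the isotropy structure is not treelike: in this case, tubular neighborhoods of different minimal isotropy strata may interact nontrivially. One can handle this either by choosing pairwise disjoint equivariant tubular neighborhoods (available from an equivariant Riemannian metric) or more systematically by inclusion--exclusion over the poset of isotropy subgroups strictly containing $H$.
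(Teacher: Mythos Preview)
The paper's proof takes a completely different and much shorter route: it identifies $\imap(\Delta^{\mathbf{J}},M)$ with the stratified mapping space $\map_{\mathrm{strat}}(\Delta^J,M)$ out of a fundamental domain, notes that $M$ and $N$ with their isotropy stratifications are conically stratified and arise as filtered realizations of filtered simplicial sets, and then invokes Theorem~5.4 of \cite{douteaustrat}, which gives exactly the implication ``weak equivalence on $0$- and $1$-simplices $\Rightarrow$ weak equivalence on all simplices'' for such stratified spaces. No induction is carried out.

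Your inductive approach has a genuine gap in the pullback identification. The template from Proposition~\ref{prop-equiv-DS} uses the \emph{path space}
\[
X_{H_0<H_0'}=\{\gamma:I\to X^{H_0}\ :\ \gamma(0)\in X^{H_0'},\ \gamma(t)\in X_{H_0}\text{ for }t>0\},
\]
not the isovariant mapping space $\imap(\Delta^{H_0<H_0'},X)$; these differ because the former allows $\gamma(0)$ to have isotropy strictly larger than $H_0'$, while the latter forces $\gamma(0)\in X_{H_0'}$ exactly. With your substitution $H_0'\rightsquigarrow H_1$, the correct right-hand term in the pullback is the path space $P=\{\gamma:\gamma(0)\in M^{H_1},\ \gamma(t)\in M_{H_0}\}$, and it is $ev_0:P\to M^{H_1}$ that is a fibration. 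If instead you use $\imap(\Delta^{H_0<H_1},M)$, then $ev_0$ lands only in $M_{H_1}$ and is not a fibration over $M^{H_1}$; worse, after passing to the $\map(\Delta^{n-1},-)$ square as in the proof of Proposition~\ref{prop-equiv-DS}, the strict pullback is empty for $n\geq 2$, since an isovariant $\alpha:\Delta^{\mathbf{H}'}\to M$ sends points of the $H_k$-stratum into $M_{H_k}$, which can never agree with any $\beta_p(0)\in M_{H_1}$ when $k>1$. Replacing $\imap(\Delta^{H_0<H_1},M)$ by $P$ repairs the pullback but then you need that $f$ induces a weak equivalence on $P$, which is not among the hypotheses and is itself a stratified statement (stratify $P$ over $M^{H_1}$ by the isotropy of $\gamma(0)$). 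Your secondary induction for $f^{H_1}$ hits the same wall: the boundary of a tubular neighborhood of $\bigcup_{K>H}M^K$ in $M^H$ is modeled by the analogous path space with $\gamma(0)\in\bigcup_{K>H}M^K$, not by a disjoint union of the $\imap(\Delta^{H<K},M)$, and this is exactly where the non-treelike interactions you flag become essential rather than a technicality. Carrying this through amounts to reproving the cited theorem of Douteau.
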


\begin{proof}
Notice that $M$ and $N$ are, in particular, stratified spaces, whose strata are the $M_H$ for $H \leq G$. For $\mathbf{J} = H_0 < H_1 < ... < H_n$, denote by $\Delta^J$ a fundamental domain of the isovariant simplex $\Delta^{\mathbf{J}}$. This agrees with the stratified simplex $\Delta^J$ in the sense of \cite[Section 1]{douteaustrat}. Note that 
$$\imap(\Delta^{\mathbf{J}}, M) = \map_{strat} (\Delta^J, M)$$
and similarly for $N$. That is, the space of isovariant maps is equal to a space of stratified maps. Suppose that $\imap(\Delta^{\mathbf{J}}, f)$ is a weak equivalence for all $\mathbf{J}$ of length 0 or 1. Then $\map_{strat} (\Delta^J, f)$ is a weak equivalence for all $J$ of length 0 or 1. The manifolds $M$ and $N$ with their isotropy stratification are conically stratified spaces which are the filtered realization of filtered simplicial sets, so by Theorem 5.4 of \cite{douteaustrat}, this implies that $\map_{strat} (\Delta^J, f)$ is a weak equivalence for all $J$ of any length. Therefore $\imap(\Delta^{\mathbf{J}}, f)$ is a weak equivalence for all $\mathbf{J}$, so $f$ is an isovariant weak equivalence, as required.
\end{proof}

\section{Isovariant fixed point theory} \label{sec-isvt-fixed-pt}
Let $G$ be a finite group, and let $M$ be a compact smooth $G$-manifold such that 
\begin{itemize}
\item $\dim M^H \geq 3$ for all $H$ that appear as isotropy groups in $M$, and
\item for $H < K$ that appear as isotropy groups in $M$, $\dim M^H - \dim M^K \geq 2$.
\end{itemize}
Let $f: M \to M$ be an isovariant map. In this section, we prove that the equivariant Reidemeister trace gives a complete invariant for the isovariant fixed point problem.

\begin{thm}\label{thm-finite-G}
If the fixed points of $f: M \to M$ can be removed equivariantly, then they can be removed isovariantly.
\end{thm}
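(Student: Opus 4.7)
The plan is to adapt the inductive argument of Klein--Williams (Theorem \ref{thm-equivt-euler}) to the isovariant setting by proceeding through the filtration $\emptyset \subset M_1 \subset M_2 \subset \cdots \subset M_n = M$ from Section \ref{subsec-fixedpoints}, removing fixed points one stratum at a time and verifying that each step can be carried out isovariantly using the geometric model structure developed in Section \ref{sec-geom-model-str}. The hypothesis $R_G(f) = 0$ combined with the global-to-local result Theorem \ref{thm-KW-global-to-local} ensures that the local obstruction $R_G(f|_{M_{k-1}, M_k})$ vanishes at every stage, so equivariantly the extensions exist; the real work is showing the extensions can be made isovariant.

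For the base case, $M_1 = M^{(H_1)}$ with $H_1$ a maximal isotropy group consists entirely of points with isotropy conjugate to $H_1$, so equivariant and isovariant notions coincide on this stratum; the equivariant homotopy removing the fixed points of $f$ on $M_1$ restricts to an isovariant homotopy. For the inductive step, suppose $f$ has been isovariantly modified to an $f_{k-1}$ with no fixed points on $M_{k-1}$. Vanishing of $R_G(f_{k-1}|_{M_{k-1}, M_k})$ gives an equivariant homotopy $H$ on $M_k$ which extends the fixed-point-free map on $M_{k-1}$. The extension problem along $M_{k-1}\hookrightarrow M_k$ is a cofibration in the geometric model structure by Corollary \ref{Mk-to-M-is-cofib}, and $M$ is geometrically fibrant by Theorem \ref{thm-mflds-fibrant}; together with the isovariant Whitehead theorem (Theorem \ref{thm-whitehead}) and the fact that isovariant and equivariant homotopy classes of maps into $M$ agree on each pure stratum (since $M_k \setminus M_{k-1}$ has constant isotropy type $(H_k)$), this machinery promotes the equivariant extension to an isovariant one.

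The main obstacle is the following subtlety: an arbitrary equivariant homotopy $H$ on $M_k$ can move a point of isotropy type exactly $(H_k)$ into the higher-isotropy subspace $M_{k-1}$ during the homotopy, which breaks isovariance. The fix is to use the equivariant tubular neighborhood theorem around $M_{k-1}$ inside $M_k$ and reparametrize $H$ so that it stays within the open stratum $M_k \setminus M_{k-1}$ away from $M_{k-1}$, and agrees with $f_{k-1}$ on a neighborhood of $M_{k-1}$; the cofibrancy of $M_{k-1}\hookrightarrow M_k$ and fibrancy of $M$ in the geometric model structure guarantee such a reparametrization exists up to isovariant homotopy. The gap hypotheses $\dim M^H - \dim M^K \geq 2$ enter precisely as in Klein--Williams to ensure the Blakers--Massey conditions underlying Theorem \ref{thm-KW-global-to-local} and hence the vanishing of local obstructions.

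Concatenating the isovariant homotopies from each inductive stage produces an isovariant homotopy from $f$ to a map with no fixed points on all of $M = M_n$, completing the argument. The heart of the proof is the bookkeeping to confine the equivariant homotopy to preserve isotropy, which is where the cofibrancy and fibrancy results of Section \ref{sec-geom-model-str} are essential; without them, the standard equivariant constructions would not lift to the isovariant category.
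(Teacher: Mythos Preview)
Your outline has the right ingredients but applies them in the wrong order, and the step where you ``promote the equivariant extension to an isovariant one'' is a genuine gap. The geometric model structure lives in $\itop$: cofibrancy of $M_{k-1}\hookrightarrow M$ and fibrancy of $M$ let you extend \emph{isovariant} maps along acyclic cofibrations, but they give no mechanism for turning an already-constructed equivariant homotopy into an isovariant one. The isovariant Whitehead theorem is likewise irrelevant here---it concerns weak equivalences, not arbitrary maps or homotopies---and your reparametrization sketch does not explain why the resulting map remains fixed-point free. An equivariant homotopy $H$ on $M_k$ produced by vanishing of $R_G(f|_{M_{k-1},M_k})$ can genuinely push points of isotropy $(H_k)$ into $M_{k-1}$, and nothing in the model structure repairs that after the fact.

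The paper's argument reverses your two steps. First, use the model structure: the isovariant homotopy on $M_{k-1}$ together with $f$ on $M$ gives an isovariant map $M \cup_{M_{k-1}\times 0} (M_{k-1}\times I) \to M$, and since $M_{k-1}\to M$ is a cofibration (Corollary~\ref{Mk-to-M-is-cofib}), the inclusion into $M\times I$ is an acyclic cofibration; fibrancy of $M$ extends this to an isovariant homotopy on all of $M$, yielding $f_1$ isovariant and fixed-point free on a neighborhood $\bar U$ of $M_{k-1}$ in $M_k$. Second, now work on $M_k\setminus U$, which has \emph{constant} isotropy type $(H_k)$, so equivariant and isovariant coincide there; the obstruction $R_G(f|_{\partial U,\,M_k\setminus U})$ is identified via excision with $R_G(f|_{M_{k-1},M_k})$, which vanishes by Theorem~\ref{thm-KW-global-to-local}, so the fixed points on $M_k\setminus U$ can be removed equivariantly and hence isovariantly. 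The model structure is used only to propagate isovariance from $M_{k-1}$ to $M$; the equivariant obstruction theory is invoked only on a pure stratum where no promotion is needed.
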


Write $(H) < (K)$ if $H$ is properly subconjugate to $K$, and let
$$ (H_1) > ... >(H_n) $$
be a total ordering of the subgroups appearing as isotropy groups of $x \in M$, which extends the subconjugacy order. For $1 \leq k \leq n$, denote by $M_k = \bigcup_{i \leq k} M^{(H_i)}$ the subspace of $M$ consisting of all points fixed by a subgroup conjugate to some $H_i$, $i \leq k$. Denote by $M_{(H_i)}$ the points fixed exactly by a subgroup conjugate to $H_i$. We prove Theorem \ref{thm-finite-G} inductively.

\begin{proof}
We prove by induction on $k$ that $f|_{M_k}$ can be isovariantly homotoped to have no fixed points. For $k=1$, the fixed points of $f$ can be eliminated equivariantly, therefore they can be eliminated on $M^{H_1}$. (See, for example, Theorems 5.7 and 10.1 of \cite{MP}.)

\medskip

By induction, assume that $f|_{M_{k-1}}$ has been isovariantly homotoped to a map without fixed points. Thus we have a homotopy
$$M \cup_{M_{k-1} \times 0} (M_{k-1} \times I) \to M$$
Note that the inclusion $M \cup_{M_{k-1} \times 0} (M_{k-1} \times I) \to M \times I$ is the pushout-product of $M_{k-1} \to M$ with $i_0:0 \to [0,1]$. The pushout-product of a map in $\icell$ with $i_0$ is a map in $\jcell$, thus by Corollary \ref{Mk-to-M-is-cofib}, this is a map in $\jcell$. By Theorem \ref{thm-mflds-fibrant}, we can thus extend $M \cup_{M_{k-1} \times 0} (M_{k-1} \times I) \to M$ to an isovariant homotopy $H: M \times I \to M$. The map $f_1 = H(-,1)$ has no fixed points on $M_{k-1}$, and therefore also has no fixed points on a neighborhood of $M_{k-1}$. In particular, $f_1$ has no fixed points on a neighborhood of $M_{k-1}$ in $M_k$. Denote by $U$ a $G$-invariant neighborhood of $M_{k-1}$ in $M_k$, whose closure $\bar{U}$ equivariantly deformation retracts to $M_{k-1}$, and such that $f_1$ has no fixed points on $\bar{U}$
. This can be achieved by, for each $i \leq k-1$, taking $V_i$ a small enough equivariant tubular neighborhood of $M^{(H_i)}$ in $M$, and setting $U = \bigcup_{i \leq k-1} V_i \cap M_k$. We'll extend $f_1|_{\bar{U}}$ isovariantly to a map that has no fixed points on $M_k$; it suffices to extend it from $\partial U$ to $M_k - U$. As all points in $\partial U$ and $M_k - U$ have isotropy groups conjugate to $H_k$, an equivariant extension will provide an isovariant extension. As in \cite{Pon}, \cite{KW2}, or \cite{PonKW}, the obstruction $R_G(f|_{\partial U, M_k -U})$ for extending the homotopy which removes the fixed points of $f_1$ from $\partial U$ to $M_k - U$ lives in
$$[C_M (M_k - U, \partial U), S_M E]^G _M.$$
By excision (e.g., Lemma 7.3.1 of \cite{caryparam}) and by the equivariant deformation retraction between $\bar{U}$ and $M_{k-1}$, 
$$[C_M (M_k - U, \partial U), S_M E]^G _M \cong [C_M (M_k, \bar{U}), S_M E]^G _M \cong [C_M(M_k, M_{k-1}), S_M E]^G _M$$
and under these isomorphisms, $R_G(f|_{\partial U, M_k -U})$ maps to $R_G(f|_{\bar{U}, M_k})$, which maps to $R_G(f|_{M_{k-1} , M_k})$. The Reidemeister trace $R_G(f)$ is trivial, and $M$ satifies the assumptions of Theorem \ref{thm-KW-global-to-local}, and so the local Reidemeister trace $R_G(f|_{M_{k-1} , M_k})$ is trivial. Thus we can extend the homotopy which removes the fixed points of $f_1$ from $\partial U$ to $M_k - U$.


\medskip

Therefore we can extend $f_1$ isovariantly to a map with no fixed points on $M_k$, and we are done.
\end{proof}

\begin{cor}\label{cor-equivt-r}
Let $G$ be a finite group, and $M$ a compact smooth $G$-manifold such that 
\begin{itemize}
\item $\dim M^H \geq 3$ for all $H$ that appear as isotropy groups in $M$, and
\item For $H < K$ that appear as isotropy groups in $M$, $\dim M^H - \dim M^K \geq 2$
\end{itemize}
Let $f: M \to M$ be an isovariant map. Then $f$ is isovariantly homotopic to a map without fixed points if and only if its equivariant Reidemeister trace, $R_G(f)$, is trivial.
\end{cor}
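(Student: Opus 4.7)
The plan is to derive this corollary as a short consequence of the preceding Theorem~\ref{thm-finite-G} together with the Klein--Williams equivariant result (Theorem~H, restated in the introduction, and underpinned by Theorem~\ref{thm-equivt-euler}). The key observation is that the corollary is logically just the biconditional obtained by combining these two one-way implications; nothing new needs to be proved about isovariant obstructions.

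For the forward direction, suppose $f \colon M \to M$ is isovariantly homotopic to a fixed-point-free isovariant map $f_1$. Since every isovariant map is equivariant and every isovariant homotopy is in particular an equivariant homotopy, $f$ is equivariantly homotopic to $f_1$, which has no fixed points. The Klein--Williams theorem (Theorem~H) then forces $R_G(f) = 0$, using precisely the dimension and codimension gap hypotheses that we have already assumed.

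For the reverse direction, suppose $R_G(f) = 0$. Since $f$ is equivariant (being isovariant), Theorem~H applies and produces an equivariant homotopy from $f$ to a fixed-point-free equivariant map. Thus the fixed points of $f$ can be removed equivariantly. Theorem~\ref{thm-finite-G} now immediately upgrades this to the isovariant setting: the fixed points of $f$ can be removed by an isovariant homotopy.

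There is really no obstacle: the genuine content of the corollary has been absorbed into Theorem~\ref{thm-finite-G} (which required the geometric model structure, cofibrancy and fibrancy of manifolds, and the global-to-local principle of Theorem~\ref{thm-KW-global-to-local}). The only care needed is to check that the hypotheses of Theorem~H and Theorem~\ref{thm-finite-G} coincide with those of the corollary, which they do by construction. I would therefore present the proof as two sentences, citing Theorem~H for one direction and Theorem~\ref{thm-finite-G} (together with Theorem~H, which identifies the triviality of $R_G(f)$ with the existence of an equivariant fixed-point-free homotopy) for the other.
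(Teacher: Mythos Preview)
Your proposal is correct and matches the paper's treatment: the corollary is stated immediately after Theorem~\ref{thm-finite-G} with no separate proof, as it follows at once by combining that theorem with Theorem~H of Klein--Williams. One small remark: the forward implication (if $f$ is isovariantly, hence equivariantly, homotopic to a fixed-point-free map, then $R_G(f)=0$) does not actually require the dimension and codimension hypotheses---it is the elementary direction of Theorem~H, coming from homotopy invariance of the Reidemeister trace---so you need not invoke those assumptions there.
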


However, if $X$ is a compact $G$-space which is not a manifold and $f: X \to X$ is an isovariant map, $R_G(f)$ is not necessarily a complete invariant for isovariantly removing fixed points of $f$:

\begin{ex}
Let $X = D(sgn) \cup_{\{0\}} S^1$, the disk in the sign representation of $C_2$ attached along its fixed point to a circle with trivial action, pictured below. The identity map of $X$ is isovariant. Equivariantly, this space is equivalent to $S^1$ with the trivial action, so the fixed points of the identity map can be eliminated equivariantly (e.g., by rotation). But note that any (continuous) isovariant map $f: X \to X$ must send the point $0 \in X$ to itself: this point is the limit of points in $X_e = D(sgn)-0$, thus $f(0)$ must be in the closure of $D(sgn)-0$. On the other hand, $0 \in X^{C_2}$, so it must be sent into $X^{C_2}$. Thus $f(0) = 0$, so any isovariant $f: X \to X$ has a fixed point. Therefore the fixed points of the identity cannot be eliminated isovariantly.

\begin{center}
\begin{tikzpicture}[scale=.9]
    \draw[blue, thick] (1.45,0)--(.55,0);
  \draw[thick] (0,0) circle (1cm);
  \filldraw (1,0) circle (.05cm);
  \node[above right] at (1,0) {$0$};
\end{tikzpicture}
\end{center}

\medskip

We can build a similar example with $\dim X \geq 3$ as follows: let $X = D(sgn^{\oplus 5}) \cup_0 S^3$, where $S^3$ has the trivial action, and take $f : X \to X$ the identity map. Equivariantly, this is equivalent to $S^3$ with the trivial action, so we can remove the fixed points of the identity (e.g. by multiplying by a unit quaternion). The same argument as above shows that the fixed points of the identity cannot be removed isovariantly.

\medskip

Both of these spaces are ``cofibrant", that is, built out of maps in $\icell$. However, they are not ``fibrant"; they do not satisfy the right lifting property with respect to all maps in $\jcell$. A crucial property of smooth $G$-manifolds which makes them fibrant is the existence of tubular neighborhoods of $X^H$ inside larger subspaces $X^K$.
\end{ex}

Possible additional invariants for the isovariant fixed point problem are the equivariant Reidemeister traces of $f$ on $X_{(H)}$ for $H \leq G$, $R_G(f|_{X_{(H)}})$. We do not know whether these are complete invariants; however, when $X$ is not a manifold, the equivariant Reidemeister trace of $f$ on $X_{(H)}$ is not determined by the equivariant Reidemeister trace of $f$. This can be seen by considering the equivariant Lefschetz number of $id|_{X_e}$ in the examples above.

\section{A weak model structure}
In this section, we demonstrate that our results in this paper arise from a weak model structure in the sense of \cite{Hen}. Weak model structures still allow for a good notion of Quillen adjunction (as well as Quillen equivalence), a well-behaved homotopy category, and most other constructions in categorical homotopy theory. We will use Theorem 3.5 of \cite{Hen}, reproduced below.

\begin{thm}
    Let $\mathcal{C}$ be a bicomplete category with two classes of maps $\cI$ and $\cJ$ such that:
    \begin{enumerate}
        \item The classes $\cI$ and $\cJ$ generate weak factorization systems $(\icof, \iinj)$ and $(\jcof, \jinj)$;
        \item $\cJ \subset \icof$; and
        \item $\mathcal{C}$ admits a left adjoint endofunctor $C$ with natural transformations $e_0, e_1: Id \to C$, such that 
        \begin{enumerate}
            \item If $i: A \to B \in \cI$, the map $(B \amalg B)\cup_{A \amalg A} CA \to CB$ is  in $\icof$;
            \item If $i: A \to B \in \cI$, the two maps $B \cup_A CA \to CB$ have the left lifting property with respect to all maps in $\jinj$ between $\cJ$-fibrant objects; and
            \item If $j: A \to B \in \cJ$, then the map $(B \amalg B)\cup_{A \amalg A} CA \to CB$ has the left lifting property with respect to all maps in $\jinj$ between $\cJ$-fibrant objects.
        \end{enumerate}
    \end{enumerate}
    Then $\mathcal{C}$ admits a weak model structure in which $\jinj$ give the fibrations between fibrant objects, and $\icof$ give the cofibrations between cofibrant objects.
\end{thm}

\begin{rem}
    We say that $X \in \mathcal{C}$ is $\mathcal{J}$-fibrant if the map from $X$ to the terminal object is in $\jinj$.
\end{rem}

We will prove that the isovariant category with the classes $\cI$ and $\cJ$ as defined in Definition \ref{defn:geometricij} satisfies the conditions of the theorem above.

\begin{thm} \label{weakmodelstr}
    There is a weak model structure on $\itop$ in which $\jinj$ gives the fibrations between fibrant objects, and $\icof$ gives the cofibrations between cofibrant objects.
\end{thm}

\begin{proof}
    The classes $\cI$ and $\cJ$ indeed generate weak factorization systems by the small object argument because their domains are compact and thus small with respect to relative cell complexes. To prove the second condition, we will show $\iinj \subseteq \jinj$, which implies that $\cJ \subseteq \icof$. Let $f:X \to Y$ be in $\iinj$, so $f$ has right lifting property with respect to $s_n \Box b^\mathbf{H}$. 
By the adjunction of pushout-product with pullback-hom, a lift of $f$ with respect to $s_n \Box b^\mathbf{H}$ is equivalent to a lift in spaces of the map 
\[
\imap_\Box(b^\mathbf{H},f): \imap(\Delta^H, X) \to \imap(\partial \Delta^H, X) \times_{\imap(\partial \Delta^H, Y)} \imap(\Delta^H,Y)
\]
against $s_n \in \cI^{\topp}$. That is, the pullback-hom map above is in $\cI^\topp$-$\mathsf{inj} \subseteq \cJ^\topp$-$\mathsf{inj}$, by the cofibrantly generated model structure on $\topp$. Thus the pullback-hom map lifts in $\topp$ with respect to $s_n \Box i_0\cong D^{n+1} \times i_0$. Using the adjunction again, this implies that $f$ has the right lifting property with respect to $s_n \Box i_0 \Box b^\mathbf{H}$, so $f \in \jinj$. Thus $\iinj \subseteq \jinj$, and therefore $\mathcal{J} \subseteq \icof$.

\medskip

For the third condition, take $CX = X \times I$. This is left adjoint to the functor $\ipath$ from subsection 3.3. We take $e_0: X \to X \times I$ to be the inclusion at 0, and $e_1: X \to X \times I$ to be the inclusion at 1.

\medskip

Now take $i = s_n \Box b^\mathbf{H}: A \to B \in \cI$. Note that the map $(B \amalg B)\cup_{A \amalg A} (A \times I) \to (B \times I)$ is the pushout-product $i_{0,1} \Box i$; since $i_{0,1} \Box s_n \cong s_{n+1}$, this map is in $\icof$, as required. This proves condition 3(a). In addition, the maps $B \cup_A (A \times I) \to B \times I$ are the pushout-products $(0 \to I) \Box i$ and $(1 \to I) \Box i$, and are therefore in $\jcof$, and so satisfy the left lifting property with respect to all maps in $\jinj$. This proves condition 3(b). Finally, if $j = s_n \Box (0 \to I) \Box b^\mathbf{H} \in \cJ$, then the map $(B \amalg B)\cup_{A \amalg A} (A \times I) \to (B \times I)$ is $i_{0,1} \Box j$; since $i_{0,1} \Box s_n \cong s_{n+1}$, this is in $\jcof$, and so satisfies the left lifting property with respect to all maps in $\jinj$. This proves condition 3(c). By Theorem 3.5 of \cite{Hen}, there is a weak model structure on $\itop$ in which $\jinj$ give the fibrations between fibrant objects, and $\icof$ give the cofibrations between cofibrant objects. \end{proof}

\begin{rem}
    Note that our structure is somewhat stronger than that of a weak model category; for instance, the maps in 3(b) and 3(c) of the theorem lift against all maps in $\jinj$, not just the ones between fibrant objects. We conjecture that this weak model structure arises from a semi-model structure (specifically, a left model structure as in \cite{HovSemi}, \cite{Spi}, and \cite{Bar}) or even a model structure (specifically, a fibrantly induced model structure in the sense of \cite{GMSV}.)
\end{rem}

\bibliography{isvtbib}{}
\bibliographystyle{amsalpha}

\end{document}